\documentclass[12pt,draft]{article}
\usepackage{amsmath,amscd}
\usepackage{amssymb,latexsym,amsthm}
\usepackage{color}
\usepackage{amssymb}
\usepackage{color}
\usepackage{amsmath,amsthm,amscd}
\usepackage[latin1]{inputenc}
\usepackage{lscape}
\usepackage{fancyhdr}
\usepackage{amsfonts}
\usepackage{pb-diagram}

\numberwithin{equation}{section}
\newtheorem{theorem}{Theorem}[section]

\newtheorem{proposition}[theorem]{Proposition}
\newtheorem{lemma}[theorem]{Lemma}

\newtheorem{corollary}[theorem]{Corollary}

\theoremstyle{definition}
\newtheorem{example}[theorem]{Example}
\newtheorem{definition}[theorem]{Definition}

\newtheorem{remark}[theorem]{Remark}

\hoffset-0.3in
\voffset-1.3cm \setlength{\oddsidemargin}{9mm}
\setlength{\textheight}{21.3cm}\setlength{\textwidth}{15.8cm}

\title{\textbf{Double Ore extensions versus graded skew PBW extensions}}
\author{James Yair G\'OMEZ, H\'ector SU\'AREZ\\School of Mathematics and Statistics,
Faculty of Science,\\ Pedagogical and Technological University of
Colombia,\\ Tunja, Colombia}
\date{}
\begin{document}
\maketitle
\begin{abstract}
\noindent In this paper we present necessary and sufficient
conditions for a graded (trimmed) double Ore extension to be a
graded (quasi-commutative) skew PBW extension. Using this fact, we
prove that a graded skew PBW extension $A = \sigma(R)\langle x_1,x_2
\rangle$ of an Artin-Schelter regular algebra $R$ is Artin-Schelter
regular. As a consequence,  every graded skew PBW extension $A =
\sigma(R)\langle x_1,x_2 \rangle$ of a connected skew Calabi-Yau
algebra $R$ of dimension $d$ is skew Calabi-Yau of dimension $d+2$.

\bigskip

\noindent \textit{Key words and phrases.} Double Ore extensions,
graded skew PBW extensions, Artin-Schelter regular algebras, skew
Calabi-Yau algebras.

\bigskip

\noindent 2010 \textit{Mathematics Subject Classification.}
 16W50, 16S37,  16W70, 16S36, 13N10.
\end{abstract}

\section{Introduction}
Regular algebras were defined by Artin and Schelter in \cite{ejm2}
and they are now known in the literature as Artin-Schelter regular
algebras. Artin-Schelter regular algebras of global dimension two
and global dimension three were classified in \cite{ejm2}, but there
are also many open questions about these algebras; for example, the
classification of Artin-Schelter regular algebras of global
dimension greater than three. Different authors have focused on
studying Artin-Schelter regular algebras, especially those of global
dimension four and global dimension five (see for example
\cite{Wang,Zhang,Zhang2,Zhun}). One of the main objectives in
noncommutative algebraic geometry is the classification of
noncommutative projective spaces. Quadratic Artin-Schelter regular
algebras  can be thought of as the coordinate rings of
noncommutative projective spaces (see \cite{Rogalski}). Calabi-Yau
algebras were defined by Ginzburg in \cite{Ginzburg1} and the skew
Calabi-Yau algebras were defined as a generalization of them. Reyes,
Rogalski and
 Zhang in \cite{Reyes} proved that for connected algebras, the skew
Calabi-Yau property is equivalent  to the Artin-Schelter regular
property.  Calabi-Yau algebras are skew Calabi-Yau, but some skew
Calabi-Yau algebras are not Calabi-Yau (for example the Jordan
 plane).\

Zhang and Zhang in \cite{Zhang} introduced a special class of
algebras called double Ore extensions. They proved that a connected
graded double Ore extension of an Artin-Schelter regular algebra is
Artin-Schelter regular. The same authors  in \cite{Zhang2}
constructed 26 families of Artin-Schelter regular algebras of global
dimension four, using double Ore extensions. Carvalho and Matczuk in
\cite{Pau} described  double Ore extensions that can be presented as
iterated Ore extensions. Other properties of double Ore extensions
have been studied (see for example \cite{Zhun}).\

Skew PBW extensions were defined in \cite{Galleg}, and recently
studied in many papers (see for example
\cite{LePai,ReyesSuarez2018-1,ReyesSuarez2018-2,ReyesSuarez2018-3}).
The second author in \cite{Hec} defined the graded skew PBW
extensions for an algebra $R$. This definition generalizes graded
iterated Ore extensions. Some properties of graded skew PBW
extensions have been studied in \cite{Hec,Hect}. The Artin-Schelter
regular property and the skew Calabi-Yau  condition for graded skew
PBW extensions were studied in \cite{Hect}; they proved that every
graded quasi-commutative skew PBW extension of an Artin-Schelter
regular algebra is Artin-Schelter regular algebra, every graded
quasi-commutative skew PBW extension of a connected skew Calabi-Yau
algebra is skew Calabi-Yau, and graded skew PBW extensions of
connected Auslander regular algebras are Artin-Schelter regular and
skew Calabi-Yau.\

In the current literature, as far as we are aware, there is no study
of the relationships between skew PBW extensions and double Ore
extensions. In this paper we study some relations between (graded)
right double Ore extensions and (graded) skew PBW extensions, and
between (graded) trimmed right double Ore extensions and (graded)
quasi-commutative skew PBW extensions. We conclude that  a graded
skew PBW extension $A = \sigma(R)\langle x_1,x_2 \rangle$ of an
Artin-Schelter regular algebra $R$ is Artin-Schelter regular, and a
graded skew PBW extension $A = \sigma(R)\langle x_1,x_2 \rangle$ of
a connected skew Calabi-Yau algebra $R$ of dimension $d$ is skew
Calabi-Yau of dimension $d+2$.

The main results of this paper are found in Section
\ref{sec.versus}. In Theorem \ref{rel11} we give conditions for a
double Ore extension to be a skew PBW extension. In Theorem
\ref{fff} we show that  a connected graded skew PBW extension $A=
\sigma(R)\langle x_1,x_2\rangle$ is a connected graded double Ore
extension of $R$. As a consequence of this theorem we show in
Corollary \ref{cor.traim} that if $A= \sigma(R)\langle
x_1,x_2\rangle$ is a connected graded quasi-commutative skew PBW
extension of $R$, then $A$ is a connected graded trimmed right
double Ore extension of $R$. In Example \ref{asejeml} we present
examples of connected graded skew PBW extensions that are connected
graded right double Ore extensions. In Theorem \ref{rel112} we give
conditions for a graded right double Ore extension
$A=R_P[x_1,x_2,\sigma,\delta,\tau]$ to be a graded skew PBW
extension of $R$. In Examples \ref{oreejm1} and \ref{oreejm3} we
present examples of connected graded right double Ore extensions
that are connected graded skew PBW extensions. In Theorem
\ref{relaciondoblesgrad} (Theorem \ref{teo.traim}) we give necessary
and sufficient conditions for a connected graded (trimmed right)
double Ore extensions of a connected algebra $R$ to be a graded
(quasi-commutative) skew PBW extension. Also in Theorem \ref{result}
we prove that if  $R$ is an Artin-Schelter regular algebra and $A =
\sigma(R)\langle x_1,x_2 \rangle$ is a graded skew PBW extension of
$R$, then $A$ is  Artin-Schelter regular and $gld(A)=gld(R)+2$. As a
consequence in  Corollary \ref{resultCalab} we conclude that every
graded skew PBW extension $A = \sigma(R)\langle x_1,x_2 \rangle$ of
a connected skew Calabi-Yau algebra $R$ of dimension $d$ is skew
Calabi-Yau of dimension $d+2$. Some examples of Artin-Schelter
regular algebras and skew Calabi-Yau algebras are presented in
Example \ref{ex.ArtinCal}.

\section{Double Ore extensions and skew PBW extensions}\label{General}

From now on, and if not stated otherwise, we  fix the following
notation: $K$ is a field, all algebras are $K$-algebras, vector
spaces are $K$-vector spaces and $\otimes$ is  $\otimes_{K}$. A
graded algebra $B=\bigoplus_{p\geq 0}B_p$ is called \emph{connected}
if $B_0=K$. In \cite[Definition 1.4]{Rogalski}, the concept of
finitely graded algebra was presented. It is said that an algebra
$B$ is \emph{finitely graded} if the following conditions hold:
\begin{enumerate}
\item[\rm (i)] $B$ is $\mathbb{N}$-graded (positively graded): $B = \bigoplus_{j\geq
0}B_j$.
\item[\rm (ii)] $B$ is connected.
\item[\rm (iii)] $B$ is \emph{finitely generated} as algebra, i.e., there is a finite set of
elements $x_1,\dots, x_n\in B$ such that the set
$\{x_{i_1}x_{i_2}\cdots x_{i_m}\mid 1\leq i_j\leq n, m\geq 1\} \cup
\{1\}$ spans $B$ as a vector space.
\end{enumerate}
Let $B$ be a finitely graded algebra. $B$ is \emph{finitely
generated} as $K$-algebra if and only if $B = K\langle x_1,\dots,
x_m\rangle/I$, where $I$ is a proper homogeneous two-sided ideal of
$K\langle x_1,\dots, x_m\rangle$. Moreover, for every $n\in
\mathbb{N}$, $\dim_KB_n < \infty$, i.e., $B$ is locally finite.

\begin{definition}\label{def.Regular}  Let $B = K\oplus B_1\oplus B_2\oplus \cdots$ be a finitely presented  algebra over
 $K$. The algebra $B$ is called \emph{Artin-Schelter regular} if the
following conditions hold:
\begin{enumerate}
\item[\rm (i)] $B$ has finite global dimension $d$.
\item[\rm (ii)] $B$ has finite GK-dimension.
\item[\rm (iii)] $B$ is \emph{Gorenstein}, meaning that $Ext_B^i(K,
B) =0$ if $i \neq d$, and $Ext^d_B(K, B)\cong K(l)$, for some
integer $l$.
\end{enumerate}
\end{definition}

An \emph{Ore extension} of a ring $R$ in the indeterminate $x$  is
the noncommutative polynomial ring $R[x; \sigma, \delta]$ with
product defined by $xr = \sigma(r)x + \delta(r)$, where $\sigma : R
\to R$ is an  endomorphism of $R$ and $\delta$ is a
$\sigma$-derivation of $R$, i.e., $\delta(r+r') =
\delta(r)+\delta(r')$ and $\delta(rr') = \sigma(r)\delta(r') +
\delta(r)r'$, for any $r$, $r'\in R$. The $R$-free basis is $\{x^l|l
\geq 0\}$. Let $B= R[x_1; \sigma_1, \delta_1]$ be an Ore extension
of  $R$ and $C = B[x_2; \sigma_2,\delta_2]$ be an Ore extension of
$B$, then $C$ is a free left $R$-module with a basis
$\{x_1^{n_1}x_2^{n_2}\}_{n_1,n_2\geq0}$. The ring $C$ is called an
iterated Ore extension of $R$. More generally we can consider the
\emph{iterated Ore extension} $R[x_1; \sigma_1, \delta_1] \cdots
[x_n; \sigma_n, \delta_n]$ where $\sigma_i$, $\delta_i$ are defined
on $R[x_1; \sigma_1, \delta_1] \cdots [x_{i-1}; \sigma_{i-1},
\delta_{i-1}]$, i.e.,
$$\sigma_i, \delta_i : R[x_1; \sigma_1, \delta_1]\cdots [x_{i-1}; \sigma_{i-1}, \delta_{i-1}] \to R[x_1; \sigma_1, \delta_1]\cdots [x_{i-1}; \sigma_{i-1}, \delta_{i-1}].$$

A \emph{skew polynomial ring} is an Ore extension
$R[x_1;\sigma_1,\delta_1]\cdots [x_n;\sigma_n,\delta_n]$ satisfying:
$x_ix_j = x_jx_i,\ for \;\; 1 \leq i,j \leq n$ and $\sigma_i(R),
\delta_i(R) \subseteq R$, \ for \ $ 1\leq i \leq n.$

\begin{definition} [\cite{Zhang}, Definition 1.3]\label{defore2}
Let $R$ be an algebra and $B$ be another algebra, containing $R$ as
a subring.
\begin{itemize}
\item[(i)] $B$ is a \emph{right double Ore extension} of $R$ if
the following conditions hold:
\begin{enumerate}
\item[\rm (ia)] $B$ is generated by $R$ and two new variables $x_1$ and $x_2$.
\item[\rm (iia)] The variables $\{x_1, x_2\}$ satisfy the relation
\begin{equation}\label{R1}
x_2x_1 = p_{12}x_1x_2 + p_{11}x_1^2 + \tau_1x_1 + \tau_2x_2 +
\tau_0,
\end{equation}
where $p_{12}, p_{11}\in K$ and $\tau_1, \tau_2, \tau_0 \in R$.
\item[\rm (iiia)] As a left $R$-module, $B =\sum\limits_{\alpha_1,\alpha_2\geq 0}Rx_1^{\alpha_1}x_2^{\alpha_2}$ and it is a left free $R$-module with $\{x_1^{\alpha_1}x_2^{\alpha_2} \mid\alpha_1\geq 0,\alpha_2\geq 0\}$.
\item[\rm (iva)] \begin{equation}\label{4ore2} x_1R + x_2R \subseteq Rx_1 + Rx_2 + R.\end{equation}
\end{enumerate}
\item[(ii)] $B$ is a \emph{left double Ore extension} of $R$ if
the following conditions hold:
\begin{enumerate}
\item[\rm (ib)] $B$ is generated by $R$ and two new variables $x_1$ and $x_2$.
\item[\rm (iib)] The variables $\{x_1, x_2\}$ satisfy the relation
\begin{equation}\label{L1}
x_1x_2 = p'_{12}x_2x_1 + p'_{11}x_1^2 + x_1\tau'_1 + x_2\tau'_2 +
\tau'_0,
\end{equation}
where $p'_{12}, p'_{11}\in K$ and $\tau'_1, \tau'_2, \tau'_0 \in R$.
\item[\rm (iiib)] As a right $R$-module, $B =\sum\limits_{\alpha_1,\alpha_2\geq 0}Rx_1^{\alpha_1}x_2^{\alpha_2}$ and it is a right free $R$-module with $\{x_1^{\alpha_1}x_2^{\alpha_2} \mid\alpha_1\geq 0,\alpha_2\geq 0\}$.
\item[\rm (ivb)] $x_1R + x_2R \subseteq Rx_1 + Rx_2 + R$.
\end{enumerate}
\item[(iii)] $B$ is a \emph{double Ore extension} if it is  left and  right double Ore extension of $R$ with the same
generating set $\{x_1,x_2\}$.
\item[(iv)] $B$ is a \emph{graded right (left) double Ore extension}
if all relations of $B$ are homogeneous with assignment
$deg(x_1)=deg(x_2) =1$.
\end{itemize}
\end{definition}

Let $P$ denote the set of scalar parameters $\{p_{12},p_{11}\}$ and
let $\tau$ denote the set $\{\tau_1,\tau_2,\tau_0 \}$. We call $P$
the parameter and $\tau$ the tail (see \cite[Page 2671]{Zhang}).
Using the ideas given in \cite[Page 2671]{Zhang}, we rewrite the
condition (\ref{4ore2}) as follows:
\begin{equation}\label{R2}
\left(\begin{smallmatrix}
x_1 \\
x_2 \\
\end{smallmatrix}\right)r
 := \left(\begin{smallmatrix}
x_1r \\
x_2r \\
\end{smallmatrix}\right)= \left(\begin{smallmatrix}
\sigma_{11}(r) && \sigma_{12}(r) \\
\sigma_{21}(r) && \sigma_{22}(r) \\
\end{smallmatrix}\right)\left(\begin{smallmatrix}
x_1 \\
x_2 \\
\end{smallmatrix}\right) + \left(\begin{smallmatrix}
\delta_1(r) \\
\delta_2(r) \\
\end{smallmatrix}\right),
\end{equation}

for all $r \in R$.\\

Writing $$\sigma(r) = \left(\begin{smallmatrix}
\sigma_{11}(r) && \sigma_{12}(r) \\
\sigma_{21}(r) && \sigma_{22}(r) \\
\end{smallmatrix}\right) \quad\text{and}\quad \delta(r) = \left(\begin{smallmatrix}
\delta_{1}(r)  \\
\delta_{2}(r)  \\
\end{smallmatrix}\right),$$  $\sigma$ is a $K$-linear map from $R$ to $M_2(R)$, where $M_2(R)$ is the $2\times2$-matrix algebra over  $R$, and
$\delta$ is a $K$-linear map from $R$ to the column $R$-module
$R^{\oplus{2}}:= \left(\begin{smallmatrix}
R \\
R  \\
\end{smallmatrix}\right)$. Indeed, as $A$ satisfies the relation $x_i(r+s)=x_ir+x_is$, for each $r,s \in R$ and $i=1,2$,
then:

\begin{equation}\label{eq.I}
\begin{array}{lcl} x_1(r+s) &=& \sigma_{11}(r+s)x_1+
\sigma_{12}(r+s)x_2 + \delta_1(r+s),
\end{array}
\end{equation}

\begin{equation}\label{eq.II}
\begin{array}{lcl}
x_1r+x_1s &=& \sigma_{11}(r)x_1+ \sigma_{12}(r)x_2 + \delta_1(r) + \sigma_{11}(s)x_1+ \sigma_{12}(s)x_2 + \delta_1(s)\\
          &=&(\sigma_{11}(r)+\sigma_{11}(s))x_1+ (\sigma_{12}(r)+\sigma_{12}(s))x_2 +
          \delta_1(r)+\delta_1(s),
\end{array}
\end{equation}
\begin{equation}\label{eq.III}
\begin{array}{lcl} x_2(r+s) &=& \sigma_{21}(r+s)x_1+
\sigma_{22}(r+s)x_2 + \delta_2(r+s),
\end{array}
\end{equation}
\begin{equation}\label{eq.IV}
\begin{array}{lcl}
x_2r+x_2s &=& \sigma_{21}(r)x_1+ \sigma_{22}(r)x_2 + \delta_2(r) + \sigma_{21}(s)x_1+ \sigma_{22}(s)x_2 + \delta_2(s)\\
          &=&(\sigma_{21}(r)+\sigma_{21}(s))x_1+ (\sigma_{22}(r)+\sigma_{22}(s))x_2 +
          \delta_2(r)+\delta_2(s).
\end{array}
\end{equation}

If we equal (\ref{eq.I}) with  (\ref{eq.II}) and (\ref{eq.III}) with (\ref{eq.IV}), we have\\

 $ \begin{array}{llcll}
\sigma(r+s) &= & \left(\begin{matrix}
\sigma_{11}(r+s) && \sigma_{12}(r+s) \\
\sigma_{21}(r+s) && \sigma_{22}(r+s) \\
\end{matrix}\right)\\&=&  \left(\begin{matrix}
\sigma_{11}(r)+\sigma_{11}(s) && \sigma_{12}(r)+\sigma_{12}(s) \\
\sigma_{21}(r)+\sigma_{21}(s) && \sigma_{22}(r)+\sigma_{22}(s) \\
\end{matrix}\right)\\
&&&&\\
& = & \left(\begin{matrix}
\sigma_{11}(r) && \sigma_{12}(r) \\
\sigma_{21}(r) && \sigma_{22}(r)\\
\end{matrix}\right)
+  \left(\begin{matrix}
\sigma_{11}(s) && \sigma_{12}(s) \\
\sigma_{21}(s) && \sigma_{22}(s) \\
\end{matrix}\right)\\
&&&&\\
 &=& \sigma(r)+\sigma(s).
\end{array} $
\vspace{0.3cm}

 $ \begin{array}{llllll}
\delta(r+s) &=& \left(\begin{matrix}
\delta_{1}(r+s)  \\
\delta_{2}(r+s)  \\
\end{matrix}\right) &=& \left(\begin{matrix}
\delta_{1}(r) +\delta_1(s) \\
\delta_{2}(r) + \delta_2(s) \\
\end{matrix}\right)= &  \left(\begin{matrix}
\delta_{1}(r)  \\
\delta_{2}(r)  \\
\end{matrix}\right) +  \left(\begin{matrix}
\delta_{1}(s)  \\
\delta_{2}(s)  \\
\end{matrix}\right)\\
&&&&&\\
 &&&&& =\delta(r)+\delta(s).
\end{array}$
\vspace{0.3cm}

Furthermore, as for  $k \in K$, $x_i(kr) = k(x_ir)$, with $i=1,2$ then\\

 $ \begin{array}{llcll}
\sigma(kr) &= & \left(\begin{matrix}
\sigma_{11}(kr) && \sigma_{12}(kr) \\
\sigma_{21}(kr) && \sigma_{22}(kr) \\
\end{matrix}\right)&=&  \left(\begin{matrix}
k \sigma_{11}(r) && k\sigma_{12}(r) \\
k \sigma_{21}(r) && k\sigma_{22}(r) \\
\end{matrix}\right)\\
&&&&\\
& = & k \left(\begin{matrix}
\sigma_{11}(r) && \sigma_{12}(r) \\
\sigma_{21}(r) && \sigma_{22}(r)\\
\end{matrix}\right)
& = &  k\sigma(r).
\end{array} $
\vspace{1cm}

$ \begin{array}{lllll} \delta(kr) &=& \left(\begin{matrix}
k\delta_{1}(r)  \\
k\delta_{2}(r)  \\
\end{matrix}\right) &=& k\left(\begin{matrix}
\delta_{1}(r)  \\
\delta_{2}(r) \\
\end{matrix}\right)= k \delta(r).
\end{array}$\\

 Thus,  equation (\ref{R2}) can also be written as
$$\left(\begin{smallmatrix}
x_1\\
x_2 \\
\end{smallmatrix}\right)r = \sigma(r)\left(\begin{smallmatrix}
x_1\\
x_2\\
\end{smallmatrix}\right) + \delta(r).$$

This equation is a natural generalization of the product in an Ore extension.\\

\begin{definition} Let $\sigma : R \rightarrow M_2(R)$ be an algebra
homomorphism. A $K$-linear map $\delta: R \mapsto R^{\oplus{2}}$ is
called a $\sigma$-derivation if $\delta(rs)=\sigma(r)\delta(s) +
\delta(r)s$ for all $r,s \in R$.
\end{definition}

\begin{remark} Let $A$ be a right double Ore extension of $R$. With the above
notation, $\sigma$ and $\delta$ are uniquely determined, $\sigma$ is
a $K$-linear map from $R$ to $M_2(R)$, and $\delta$ is a $K$-linear
map from $R$ to $R^{\oplus{2}}$. Together with Definition
\ref{defore2}-(i), all symbols of $\{P, \sigma, \delta, \tau\}$ are
defined now. When everything is understood, a right double extension
or a double Ore extension $A$ is also denoted by $A= R_P[x_1,x_2;
\sigma, \delta,\tau]$. By this notation, we are working with a right
double Ore extension though $A$ can be also a (left) double Ore
extension (see \cite[Convention 1.6]{Zhang}).
\end{remark}

\begin{definition} Let $A=R_P[x_1,x_2; \sigma, \delta,\tau]$ be a a right double Ore
extension.  If $\delta$ is a zero map and $\tau$ consists of zero
elements, then the right double Ore extension is denoted by
$R_P[x_1,x_2;\sigma]$ and is called a \emph{trimmed right double Ore
extension}.
\end{definition}

\begin{lemma}[\cite{Zhang}, Lemma 1.7]\label{lema1.7}
Let $A = R_P[x_1,x_2;\sigma, \delta, \tau]$ be a right double
extension of $R$. With $\{ \sigma,\delta \}$  defined as in
(\ref{R2}). Then the following holds:
\begin{enumerate}
\item[(i)] $\sigma: R \rightarrow M_2(R)$ is an algebra homomorphism.
\item[(ii)] $\delta: R \rightarrow R^{\oplus{2}}$ is a $\sigma$-derivation.
\item[(iii)] If $\sigma : R \rightarrow M_2(R)$ is an algebra homomorphism and  $\delta: R \rightarrow R^{\oplus{2}}$ is a $\sigma$-derivation, then (\ref{R2})
holds for all elements $r \in R$ if and only if it holds for a set
of generators.
\end{enumerate}
\end{lemma}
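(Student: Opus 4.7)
The plan is to extract all three claims from a single computation: the associativity identity $x_i(rs)=(x_ir)s$ applied to the defining relation (\ref{R2}), combined with the fact, guaranteed by Definition \ref{defore2}(iiia), that $B$ is a free left $R$-module on the monomials $\{x_1^{\alpha_1}x_2^{\alpha_2}\}$. Additivity and $K$-linearity of $\sigma$ and $\delta$ are already established in the prose preceding Lemma \ref{lema1.7}, so for (i) and (ii) it remains only to verify multiplicativity of $\sigma$, the identity $\sigma(1)=I_2$, and the Leibniz-type rule for $\delta$.

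For (i) and (ii), I would expand $x_i(rs)$ in two ways. A direct application of (\ref{R2}) gives
\[
x_i(rs)=\sigma_{i1}(rs)\,x_1+\sigma_{i2}(rs)\,x_2+\delta_i(rs).
\]
On the other hand, $(x_ir)s=\sigma_{i1}(r)(x_1s)+\sigma_{i2}(r)(x_2s)+\delta_i(r)s$, and a second application of (\ref{R2}) to each $x_js$ produces an expression with explicit coefficients in the basis $\{1,x_1,x_2\}$. Reading off these coefficients using the freeness in Definition \ref{defore2}(iiia), I obtain
\[
\sigma_{ij}(rs)=\sigma_{i1}(r)\sigma_{1j}(s)+\sigma_{i2}(r)\sigma_{2j}(s),\qquad \delta_i(rs)=\sigma_{i1}(r)\delta_1(s)+\sigma_{i2}(r)\delta_2(s)+\delta_i(r)s,
\]
which in matrix form are precisely $\sigma(rs)=\sigma(r)\sigma(s)$ and $\delta(rs)=\sigma(r)\delta(s)+\delta(r)s$. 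Applying the same coefficient comparison to the trivial identity $x_i\cdot 1=x_i$ yields $\sigma(1)=I_2$ and $\delta(1)=0$. Together with the $K$-linearity already recorded, this proves (i) and (ii).

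For (iii), the forward direction is immediate. For the converse, suppose $\sigma$ is an algebra homomorphism, $\delta$ is a $\sigma$-derivation, and (\ref{R2}) holds for every element of a generating set $S\subseteq R$. Let $T\subseteq R$ denote the set of $r\in R$ for which (\ref{R2}) holds. The $K$-linearity of $\sigma$ and $\delta$ makes $T$ a $K$-subspace, and the identity $\sigma(1)=I_2$ (built into $\sigma$ being an algebra homomorphism) together with $\delta(1)=0$ (forced by $\delta(1)=\sigma(1)\delta(1)+\delta(1)$) show that $1\in T$. Closure of $T$ under multiplication is the main content: if $r,s\in T$ then, substituting the identities of $\sigma$ and $\delta$ into the expansion of $(x_ir)s$, the same calculation of the previous paragraph identifies $(x_ir)s$ with $\sigma_{i1}(rs)x_1+\sigma_{i2}(rs)x_2+\delta_i(rs)$, so $rs\in T$. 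Hence $T$ is a unital $K$-subalgebra of $R$ containing $S$, whence $T=R$.

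The whole argument is essentially bookkeeping, with no serious obstacle: the point that genuinely requires care is exploiting the free-basis property of Definition \ref{defore2}(iiia) when comparing coefficients, since otherwise the individual identities for the $\sigma_{ij}$ and $\delta_i$ cannot be pried apart from the single equation $x_i(rs)=(x_ir)s$.
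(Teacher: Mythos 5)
Your proof is correct: the paper itself states this lemma as a citation of \cite[Lemma~1.7]{Zhang} and gives no proof, and your argument --- expanding $x_i(rs)=(x_ir)s$ via (\ref{R2}) and comparing coefficients against the free left $R$-basis $\{x_1^{\alpha_1}x_2^{\alpha_2}\}$ from Definition \ref{defore2}(iiia), then running the subalgebra argument for (iii) --- is exactly the standard computation used in the cited source. You also correctly identify the one point needing care, namely that freeness is what allows the matrix identities to be read off entrywise.
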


By the  Lemma \ref{lema1.7}, when $A = R_P[x_1, x_2; \sigma, \delta,
\tau]$ is a right double Ore extension, then $\sigma$ is an algebra
homomorphism and $\delta$ a $\sigma$-derivation. We now call
$\sigma$ a homomorphism, $\delta$ a derivation.\\

Suppose $A= R_P[x_1,x_2;\sigma,\delta,\tau]$ is a right double
extension. Zhang and Zhang in \cite{Zhang} defined the
$P$-determinant of $\sigma := \left(\begin{smallmatrix}
\sigma_{11} && \sigma_{12} \\
\sigma_{21} && \sigma_{22} \\
\end{smallmatrix}\right)$ or just the determinant of $\sigma$ to
be the map $$\det\sigma(r) =
-p_{11}\sigma_{12}\sigma_{11}(r)+\sigma_{22}\sigma_{11}(r)-p_{12}\sigma_{12}\sigma_{21}(r),$$
for all $r\in R$. This is a $K$-linear map from $R$ to itself.

\begin{remark}\label{detll} Let $A= R_P[x_1,x_2;\sigma,\delta,\tau]$ be a right double
extension. Then the following holds:
\begin{itemize}
\item[(i)] If $P=\{1,0\}$, then $\det\sigma = \sigma_{22}\sigma_{11} - \sigma_{12}\sigma_{21}.$
\item[(ii)] If $p_{12} \neq 0$,  then  $\det\sigma = -p_{12}^{-1}p_{11}\sigma_{11}\sigma_{22} -
p_{12}^{-1}\sigma_{21}\sigma_{12}+ \sigma_{11}\sigma_{22}$ (see
\cite[Page 2677]{Zhang}).
\end{itemize}
\end{remark}

\begin{proposition}[\cite{Zhang}, Proposition 2.1]\label{prop2.1do} Let  $A= R_P[x_1,x_2;\sigma,\delta,\tau]$ be a right double
extension.
\begin{itemize}
\item[(i)] $\det\sigma$ is
an algebra endomorphism of $R$.
\item[(ii)] If $\sigma$ is invertible, then $\det\sigma$ is
invertible.
\item[(iii)] Suppose $p_{12}\neq0$. If $\det\sigma$ is invertible, then $\sigma$ has
a right inverse. Furthermore, if $A$ is a connected graded right
double extension, then $\sigma$ is invertible and $A$ is a double
extension.
\end{itemize}
\end{proposition}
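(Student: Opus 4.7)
The strategy is to handle the three parts in order, building on a common toolkit of identities among the component maps $\sigma_{ij}$. These identities come from two sources: first, the algebra homomorphism property $\sigma(rs)=\sigma(r)\sigma(s)$ in $M_2(R)$ (Lemma \ref{lema1.7}(i)), which yields
$$\sigma_{ij}(rs)=\sigma_{i1}(r)\sigma_{1j}(s)+\sigma_{i2}(r)\sigma_{2j}(s)\qquad\text{for all }i,j\in\{1,2\};$$
and second, a family of quadratic relations among compositions $\sigma_{ij}\sigma_{kl}$ obtained by writing $(x_2x_1)r$ in two ways---using the defining relation (\ref{R1}) and using the associativity $(x_2x_1)r=x_2(x_1r)$---and then equating the coefficients of the basis monomials $x_1^2,\, x_1x_2,\, x_2^2,\, x_1,\, x_2,\, 1$ of $A$ as a left $R$-module.

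For part (i), $K$-linearity of $\det\sigma$ is immediate from $K$-linearity of each $\sigma_{ij}$, so the content is multiplicativity. I would expand $\det\sigma(rs)$ via the bilinear identities above, obtaining a sum of compositions evaluated at $(r,s)$, and then compare with $\det\sigma(r)\det\sigma(s)$. The mixed cross-terms do not match in their given ordering and must be rewritten using the quadratic relations from the $x_1^2$- and $x_1x_2$-coefficient comparisons; after this substitution the two expressions should coincide.

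For part (ii), viewing $\sigma$ as an element of $M_2(\mathrm{End}_K(R))$ via the matrix $(\sigma_{ij})$, invertibility means the existence of a $\mu\in M_2(\mathrm{End}_K(R))$ with $\sigma\mu=\mu\sigma=I$. My plan is to form the analogous determinant $\det\mu$ (taken with respect to the parameters dual to those of $\sigma$, dictated by the left-double structure that invertibility induces on $A$) and to verify $\det\sigma\circ\det\mu=\det\mu\circ\det\sigma=\mathrm{id}_R$ using the toolkit identities from part (i).

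Part (iii) is the main obstacle. Assuming $p_{12}\neq 0$ and $\det\sigma$ invertible, I would construct an explicit right inverse $\sigma^{-1}_r$ for $\sigma$ by a non-commutative analogue of Cramer's rule, guided by the simplified expression for $\det\sigma$ in Remark \ref{detll}(ii); its four entries should be linear combinations of the $\sigma_{ij}$'s composed with $(\det\sigma)^{-1}$. The difficulty is that the classical adjugate formula cannot be transplanted wholesale, because the $\sigma_{ij}$ do not commute: the ordering of factors in each candidate entry must be pinned down using the quadratic compatibility identities, and the verification $\sigma\cdot\sigma^{-1}_r=I$ needs each of the four matrix entries to be checked separately. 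For the final assertion, when $A$ is connected graded each $\sigma_{ij}$ is homogeneous of degree $0$, so $\sigma$ restricts to a $K$-linear endomorphism of the finite-dimensional $R_n\oplus R_n$ for every $n\geq 0$; the right inverse, together with finite dimensionality, forces a two-sided inverse on each component and hence globally. Once $\sigma$ is invertible, the reversed form of (\ref{R2}) expresses $r x_1$ and $r x_2$ in $\sum Rx_i+R$, yielding the left double extension structure on the same generating set $\{x_1,x_2\}$ required by Definition \ref{defore2}(iii), so $A$ is a double extension.
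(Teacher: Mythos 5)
This proposition is quoted from \cite{Zhang} (Proposition 2.1 there) and the paper supplies no proof of its own, so there is no in-text argument to compare against; your outline in fact reconstructs the strategy of the original proof. For (i), the source argues exactly as you describe: combine the bilinear identities $\sigma_{ij}(rs)=\sigma_{i1}(r)\sigma_{1j}(s)+\sigma_{i2}(r)\sigma_{2j}(s)$ with the quadratic relations obtained by expanding $(x_2x_1)r=x_2(x_1r)$ against the left $R$-basis (note you will also need the $x_2^2$-coefficient relation, not only the $x_1^2$- and $x_1x_2$-ones). For (iii), the explicit adjugate-type right inverse and the degree-by-degree finite-dimensionality argument are likewise how the source proceeds, using the standing assumption that connected graded algebras are locally finite.

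There are, however, two concrete gaps. First, neither this paper nor your sketch fixes what ``$\sigma$ is invertible'' means, and the reading you adopt --- a two-sided inverse of $(\sigma_{ij})$ in $M_2(\mathrm{End}_K(R))$ under the naive row-by-column composition --- is not the notion used in \cite{Zhang}: solving $x_ir=\sum_j\sigma_{ij}(r)x_j+\delta_i(r)$ for $rx_j$ forces identities of the form $\sum_i\sigma_{ik}(\phi_{ij}(r))=\delta_{jk}r$, i.e.\ a transposed composition convention, and the inverse $\phi$ is moreover required to be an algebra homomorphism $R\to M_2(R)$. With the wrong convention the verification in (ii) will not close; in (iii) you must still check that the two-sided inverse produced by finite dimensionality is multiplicative, which your sketch omits; and the description of $\det\mu$ via ``parameters dual to those of $\sigma$'' is too vague to be checked. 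Second, for the final claim of (iii), invertibility of $\sigma$ gives the containment $Rx_1+Rx_2\subseteq x_1R+x_2R+R$, but Definition \ref{defore2}(ii) also requires $A$ to be free as a right $R$-module on the monomials $x_1^{\alpha_1}x_2^{\alpha_2}$; that freeness does not follow from the containment alone and needs a separate argument (it is a standalone lemma in \cite{Zhang}).
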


Artin-Schelter regular property was studied in \cite[Theorem 3.3 and
Theorem 0.2]{Zhang}, for connected graded  double Ore extensions and
connected graded trimmed double Ore extensions.

\begin{theorem}[\cite{Zhang}, Theorem 3.3]\label{Teorema3.3}
Let $R$ be an Artin-Schelter regular algebra and let $A$ be a
connected graded trimmed double Ore extension $R_P[x_1,
x_2;\sigma]$. Then $A$ is Artin-Schelter regular and $gld(A) =gld(R)
+ 2$.
\end{theorem}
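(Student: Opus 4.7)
The plan is to reduce the Artin-Schelter regularity of $A$ to that of $R$ via a short Koszul-type free resolution of $R$ as a right $A$-module that accounts for the two new generators $x_1, x_2$, and then to splice it with a minimal resolution of the trivial module $K$ over $R$. The guiding analogy is the classical Ore extension case, where the short exact sequence $0\to A(-1)\to A\to R\to 0$ raises global dimension by one; in the trimmed double case we expect the analogous length-two complex to raise it by two.

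The first main step is to construct a graded free resolution
\begin{equation*}
0 \longrightarrow A(-2) \stackrel{d_2}{\longrightarrow} A(-1)^{\oplus 2} \stackrel{d_1}{\longrightarrow} A \stackrel{\varepsilon}{\longrightarrow} R \longrightarrow 0
\end{equation*}
of $R = A/(x_1 A + x_2 A)$ as a right $A$-module. The augmentation $\varepsilon$ kills the ideal generated by $x_1,x_2$; the map $d_1$ is given by right multiplication by the column $\bigl(\begin{smallmatrix} x_1 \\ x_2 \end{smallmatrix}\bigr)$; and $d_2$ is dictated by the single homogeneous quadratic relation of the trimmed extension, namely $x_2 x_1 - p_{12} x_1 x_2 - p_{11} x_1^2 = 0$, so $d_2$ sends the generator to the row corresponding to this syzygy. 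Exactness follows from the free basis $\{x_1^{\alpha_1} x_2^{\alpha_2}\}$ of Definition \ref{defore2}(iiia): every element of the kernel of $d_1$ can be brought into PBW normal form, and the only obstruction in degree two is exactly the given relation, so the Koszul syzygy accounts for everything.

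Given this resolution, a change-of-rings argument yields the claim. Tensoring over $R$ with a minimal graded $R$-free resolution $P_\bullet \to K$ of length $d = \mathrm{gld}(R)$ produces a graded $A$-free resolution of $K$ of length $d+2$, because $A$ is $R$-free on both sides by Proposition \ref{prop2.1do}(iii), which gives invertibility of $\sigma$ in the connected graded case and hence compatibility of the two $R$-module structures. This shows $\mathrm{gld}(A) \le d+2$; the reverse inequality follows from restriction along $A\to R$, which sends projectives to projectives by the same freeness. Finite GK-dimension is immediate from the PBW basis, since $\dim_K A_n$ grows like $\dim_K R_n$ convolved with $\dim_K K[x_1,x_2]_n$, raising GK-dimension by exactly $2$.

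For the Gorenstein condition, one applies $\mathrm{Hom}_A(-,A)$ to the spliced resolution and uses the Koszul-type complex above (in its dualized form) to identify $\mathrm{Ext}^{d+2}_A(K,A)$ with $\mathrm{Ext}^d_R(K,R)$ twisted by the degree shift coming from $x_1,x_2$; by hypothesis the latter is $K(l)$, yielding $\mathrm{Ext}^{d+2}_A(K,A) \cong K(l')$ for some integer $l'$, and all lower Ext groups vanish by a diagonal vanishing in the double complex. The main obstacle is the exactness of the Koszul-type resolution at the middle term: one has to rule out any higher syzygy, which relies crucially on the trimmed hypothesis (so there are no lower-order correction terms from $\delta$ or $\tau$ to complicate the Koszul differential) and on the connected graded assumption (so the relation is purely homogeneous of degree two). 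This is where the argument becomes delicate, and where Proposition \ref{prop2.1do}(iii) enters to guarantee that $\sigma$ is an honest automorphism, so the left and right module-theoretic computations actually align.
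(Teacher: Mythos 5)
You should first be aware that the paper contains no proof of this statement: it is imported verbatim from Zhang and Zhang (\cite{Zhang}, Theorem 3.3) and used as a black box (together with Theorem \ref{Teorema0.2}) in the proof of Theorem \ref{result}. So there is no internal argument to compare against; measured against the proof in the cited source, your outline does track it closely --- the length-two Koszul-type resolution $0\to A(-2)\to A(-1)^{\oplus 2}\to A\to R\to 0$ of $R\cong A/(x_1A+x_2A)$ with $d_2$ given by the syzygy $x_1(p_{12}x_2+p_{11}x_1)+x_2(-x_1)=0$, the change-of-rings spectral sequence $\mathrm{Ext}^p_R(K,\mathrm{Ext}^q_A(R,A))\Rightarrow\mathrm{Ext}^{p+q}_A(K,A)$ collapsing at $q=2$, the Hilbert-series computation $H_A(t)=H_R(t)/(1-t)^2$ for the GK-dimension, and the appeal to Proposition \ref{prop2.1do}(iii) for invertibility of $\sigma$ (hence two-sided $R$-freeness of $A$) are exactly the ingredients of the original argument. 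Your verification of exactness at the middle term is gestured at rather than carried out; the clean way to close it is the Euler-characteristic count $H_A(t)(1-t)^2-H_R(t)=0$ combined with injectivity of $d_2$.

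Two steps, however, are wrong as written. First, tensoring a minimal free resolution $P_\bullet\to K$ over $R$ with $A$ does not yield a resolution of $K$ as an $A$-module: by flatness it yields a resolution of $A\otimes_R K\cong A/(AR_{\geq 1})$, whose Hilbert series is $1/(1-t)^2$, not $1$. To resolve $K$ over $A$ you must combine the two resolutions into a total complex of a double complex (equivalently, run the spectral sequence you invoke only at the very end), and the exactness of that total complex is precisely the content requiring proof; ``splicing'' is not a one-step operation here. Second, the lower bound $gld(A)\geq gld(R)+2$ cannot follow from ``restriction along $A\to R$ sends projectives to projectives'': $R$ itself, viewed as an $A$-module through the quotient map, has projective dimension $2$ over $A$, so restriction fails to preserve projectivity in exactly the direction you need. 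The correct argument is either minimality of the resulting resolution of $K$ over $A$ (all differentials have entries in $A_{\geq 1}$), or the non-vanishing $\mathrm{Ext}^{d+2}_A(K,A)\cong K(l')\neq 0$ that you establish in the Gorenstein step anyway, together with the identity $gld(A)=\mathrm{pd}_A(K)$ for connected graded algebras. With those two repairs the outline is sound and coincides with the cited proof.
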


\begin{theorem}[\cite{Zhang}, Theorem 0.2]\label{Teorema0.2}
Let $R$ be an Artin-Schelter regular algebra. If $A$ is a connected
graded double Ore extension of $R$, then $A$ is Artin-Schelter
regular and $gld(A)=gld(R)+2$.
\end{theorem}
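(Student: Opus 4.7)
The plan is to equip $A$ with a positive filtration whose associated graded is a connected graded trimmed double Ore extension of $R$, and then invoke Theorem \ref{Teorema3.3} together with standard filtered--graded transfer of homological properties. Although $A$ is already connected graded in its intrinsic grading, a coarser filtration that inflates the weight of $x_1,x_2$ relative to that of $R$ will force the derivation $\delta$ and the tail $\tau$ into strictly lower order, thus killing them in the associated graded.

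First I would fix an integer $N\ge 2$ and define on $A$ the filtration $F_\bullet A$ by assigning filtration weight $N$ to each $x_i$ and weight $\deg_R(r)$ to each homogeneous $r\in R$. Since $A$ is connected graded with $\deg(x_1)=\deg(x_2)=1$ and defining relations of intrinsic degree $2$, one has $\tau_1,\tau_2\in R_1$ and $\tau_0\in R_2$, while each $\delta_i$ raises degree by $1$. In the relation $x_ir = \sigma_{i1}(r)x_1 + \sigma_{i2}(r)x_2 + \delta_i(r)$ the terms $\sigma_{ij}(r)x_j$ have filtration weight $\deg(r)+N$, while $\delta_i(r)$ has weight $\deg(r)+1\le \deg(r)+N-1$, so the derivation contributes strictly lower order. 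Likewise, in $x_2x_1 = p_{12}x_1x_2 + p_{11}x_1^2 + \tau_1x_1 + \tau_2x_2 + \tau_0$ the leading terms have weight $2N$, while the tail terms have weight at most $N+1<2N$. The PBW-type left $R$-basis $\{x_1^{\alpha_1}x_2^{\alpha_2}\}$ respects this filtration, so $\mathrm{gr}_F(A)\cong R_P[x_1,x_2;\sigma]$, the trimmed double Ore extension of $R$ with the same parameter $P$ and homomorphism $\sigma$.

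Second, Theorem \ref{Teorema3.3} applied to $\mathrm{gr}_F(A)$ gives that this trimmed extension is Artin-Schelter regular with $\mathrm{gld}(\mathrm{gr}_F(A)) = \mathrm{gld}(R)+2$. I would then transfer to $A$ by the standard filtered--graded principle: for an exhaustive, separated, positive filtration with connected graded Noetherian associated graded of finite global dimension, the filtered algebra $A$ is Noetherian of the same global dimension, and the Gorenstein condition together with the Gorenstein shift $l$ lifts from $\mathrm{gr}_F(A)$ to $A$. Finiteness of GK-dimension on $A$ is then inherited from the invariance of Hilbert series with respect to the intrinsic grading.

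The main obstacle will be the precise justification of the filtered--graded transfer of the Gorenstein condition $\mathrm{Ext}^{i}_{A}(K,A)\cong K(l)$ concentrated in degree $i=d$, which requires a spectral sequence argument from an $F$-filtered projective resolution of $K$ to deduce the vanishing and the identification of the top $\mathrm{Ext}$ at the $A$-level from the corresponding facts at the $\mathrm{gr}_F$-level. An alternative that avoids spectral sequences is to lift the minimal graded free resolution of $K$ over $\mathrm{gr}_F(A)$ directly to a free resolution over $A$ of the same length and verify exactness via the Hilbert series identity $H_A=H_{\mathrm{gr}_F(A)}$ for the intrinsic grading.
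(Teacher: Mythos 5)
This statement is quoted verbatim from \cite{Zhang} (Theorem 0.2) and the paper supplies no proof of its own, so there is nothing internal to compare against; your argument should instead be measured against the cited source, where it in fact coincides with the strategy used there. Your proposal is correct: inflating the filtration weight of $x_1,x_2$ to some $N\ge 2$ so that the homogeneous relations force $\delta$ and $\tau$ into strictly lower order, identifying $\mathrm{gr}_F(A)$ with the trimmed extension $R_P[x_1,x_2;\sigma]$ via the left $R$-basis $\{x_1^{\alpha_1}x_2^{\alpha_2}\}$, applying Theorem \ref{Teorema3.3}, and then lifting AS-regularity and the global dimension along the exhaustive, separated, locally finite filtration (with $F_0A=K$ because $R$, being Artin-Schelter regular, is connected) is exactly the reduction used in \cite{Zhang}, and the filtered--graded transfer of the Gorenstein condition that you flag as the crux is the standard lifting lemma, correctly identified.
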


The skew PBW extensions introduced in \cite{Galleg} include many
algebras of interest for modern mathematical physicists. Skew PBW
extensions are defined by a ring and a set of variables with
relations between them.

\begin{definition}[\cite{Galleg}, Definition 1]\label{defpbwt}
Let $R$ and $A$ be rings. We say that $A$ is a \textit{skew PBW
extension of} $R$ if the following conditions hold:
\begin{enumerate}
\item[\rm (i)]$R\subseteq A$;
\item[\rm (ii)] there exist finitely many elements $x_1,\dots ,x_n\in A$ such that $A$ is a left free $R$-module, with basis the
set of standard monomials
\begin{center}
${\rm Mon}(A):= \{x^{\alpha}:=x_1^{\alpha_1}\cdots
x_n^{\alpha_n}\mid \alpha=(\alpha_1,\dots ,\alpha_n)\in
\mathbb{N}^n\}$.
\end{center}
Moreover, $x^0_1\cdots x^0_n := 1 \in {\rm Mon}(A)$.

\item[\rm (iii)]For each $1\leq i\leq n$ and any $r\in R\ \backslash\ \{0\}$, there exists an element $c_{i,r}\in R\ \backslash\ \{0\}$ such that
\begin{equation}\label{torcida3}
x_ir-c_{i,r}x_i\in R.
\end{equation}
\item[\rm (iv)]For $1\leq i,j\leq n$ there exists $c_{i,j}\in R\ \backslash\ \{0\}$ such that
\begin{equation}\label{torcida4}
x_jx_i-c_{i,j}x_ix_j\in R+Rx_1+\cdots +Rx_n.
\end{equation}
Under these conditions we will write $A=\sigma(R)\langle
x_1,\dots,x_n\rangle$. For $\alpha=(\alpha_1,\dots,\alpha_n)\in
\mathbb{N}^n$, $\sigma^{\alpha}:=\sigma_1^{\alpha_1}\cdots
\sigma_n^{\alpha_n}$, $|\alpha|:=\alpha_1+\cdots+\alpha_n$.
\end{enumerate}
\end{definition}

\begin{proposition}[\cite{Galleg}, Proposition 3]\label{prop23}
Let $A$ be a skew PBW extension of $R$. For each $1\leq i\leq n$,
there exists an injective endomorphism $\sigma_i:R\rightarrow R$ and
a $\sigma_i$-derivation $\delta_i:R\rightarrow R$ such that
\begin{equation}
x_ir=\sigma_i(r)x_i+\delta_i(r),\ \ \ \  \ r \in R.
\end{equation}
\end{proposition}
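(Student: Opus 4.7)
The plan is to use the existence clause in Definition \ref{defpbwt}(iii) to \emph{define} $\sigma_i$ and $\delta_i$, and then to exploit the freeness of $A$ as a left $R$-module over $\mathrm{Mon}(A)$ to prove that these assignments are well-defined, additive, multiplicative, and satisfy the derivation identity. Concretely, for each $r \in R\setminus\{0\}$, condition (iii) gives an element $c_{i,r} \in R\setminus\{0\}$ and an element $d_{i,r} := x_i r - c_{i,r}x_i \in R$. I would set $\sigma_i(r) := c_{i,r}$, $\delta_i(r) := d_{i,r}$, and extend by $\sigma_i(0)=\delta_i(0)=0$. The freeness of $A$ with basis $\mathrm{Mon}(A)$ — in particular the linear independence of $\{1, x_i\}$ over $R$ on the left — is what guarantees that $c_{i,r}$ and $d_{i,r}$ are the \emph{unique} elements of $R$ satisfying $x_i r = c_{i,r}x_i + d_{i,r}$. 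This uniqueness is the engine that drives everything else.

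With the definitions in hand, I would verify each algebraic property by writing $x_i(r+s)$ and $x_i(rs)$ in two ways and invoking uniqueness. For additivity, expand $x_i(r+s)$ using the formula directly and compare with $x_i r + x_i s = (\sigma_i(r)+\sigma_i(s))x_i + (\delta_i(r)+\delta_i(s))$; the coefficient of $x_i$ and the constant term must match, giving $\sigma_i(r+s)=\sigma_i(r)+\sigma_i(s)$ and likewise for $\delta_i$. For multiplicativity, compute
\[
x_i(rs) = (x_i r)s = \sigma_i(r)(x_i s) + \delta_i(r)s = \sigma_i(r)\sigma_i(s)x_i + \bigl(\sigma_i(r)\delta_i(s) + \delta_i(r)s\bigr),
\]
and again read off $\sigma_i(rs) = \sigma_i(r)\sigma_i(s)$ and the $\sigma_i$-derivation identity $\delta_i(rs) = \sigma_i(r)\delta_i(s) + \delta_i(r)s$. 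Applying the same comparison to $x_i \cdot 1 = x_i$ yields $\sigma_i(1)=1$ and $\delta_i(1)=0$, so $\sigma_i$ is a ring endomorphism and $\delta_i$ is indeed a $\sigma_i$-derivation.

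Injectivity of $\sigma_i$ is immediate from the definition: condition (iii) demands $c_{i,r}\in R\setminus\{0\}$ whenever $r\neq 0$, so $\sigma_i(r)=0$ forces $r=0$. The only substantive point — and the mild obstacle worth flagging — is the invocation of the free-module structure of $A$ over $R$ to guarantee that the two scalars $c_{i,r}$ and $d_{i,r}$ are uniquely determined; without this step, the putative maps $\sigma_i$ and $\delta_i$ would not even be well-defined as functions. Once uniqueness is secured, every subsequent verification is a routine equality-of-coefficients argument, so the proof reduces to the careful bookkeeping described above.
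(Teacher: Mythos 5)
Your proof is correct and is essentially the argument given in the cited source \cite{Galleg} (the paper itself imports this proposition without reproducing its proof): one defines $\sigma_i$ and $\delta_i$ via condition (iii) of Definition \ref{defpbwt}, uses the left $R$-freeness of $A$ on ${\rm Mon}(A)$ to get uniqueness of the coefficients of $1$ and $x_i$, and then reads off additivity, multiplicativity, the $\sigma_i$-derivation identity, and injectivity by comparing coefficients. No gaps; the uniqueness point you flag is indeed the only substantive step.
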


Next we present some important subclasses of skew PBW extensions.

\begin{definition}\label{sigmapbwderivationtype}
Let $A$ be a skew PBW extension of $R$, $\Sigma:=\{\sigma_1,\dotsc,
\sigma_n\}$ and $\Delta:=\{\delta_1,\dotsc, \delta_n\}$, where
$\sigma_i$ and $\delta_i$ ($1\leq i\leq n$) are as in Proposition
\ref{prop23}.
\begin{enumerate}
\item[\rm (a)]\label{def.quasicom} $A$ is called \textit{quasi-commutative} if the conditions
{\rm(}iii{\rm)} and {\rm(}iv{\rm)} in Definition \ref{defpbwt} are
replaced by \begin{enumerate}
\item[\rm (iii')] for each $1\leq i\leq n$ and all $r\in R\ \backslash\ \{0\}$, there exists $c_{i,r}\in R\ \backslash\ \{0\}$ such that
\begin{equation}\label{rel1.quasi}
x_ir=c_{i,r}x_i;
\end{equation}
\item[\rm (iv')]for any $1\leq i,j\leq n$, there exists $c_{i,j}\in R\ \backslash\ \{0\}$ such that
\begin{equation}\label{rel2.quasi}
x_jx_i=c_{i,j}x_ix_j.
\end{equation}
\end{enumerate}
\item[\rm (b)]  $A$ is called \textit{bijective}, if $\sigma_i$ is bijective for each $\sigma_i\in \Sigma$, and $c_{i,j}$ is invertible for any $1\leq
i<j\leq n$.
\item[\rm (c)] If $\sigma_i={\rm id}_R$ for every $\sigma_i\in \Sigma$, we say that $A$ is a skew PBW extension of \textit{derivation type}.
\item[\rm (d)]  If $\delta_i = 0$ for every $\delta_i\in \Delta$, we say that $A$ is a skew PBW extension of \textit{endomorphism type}.
\item[\rm (e)]   Any element $r$ of $R$ such that $\sigma_i(r)=r$ and $\delta_i(r)=0$ for all $1\leq i\leq n$, will be called a \textit{constant}. $A$ is called \textit{constant} if every element of $R$ is constant.
\end{enumerate}
\end{definition}

Graded skew PBW extensions were defined by the second author in
\cite{Hec} in order to study the Koszul property  and Artin-Schelter
regular property for  skew PBW extensions.

\begin{proposition}[\cite{Hec}, Proposición 2.7] \label{prop24}
Let $R=\bigoplus_{m\geq 0}R_m$ be a $\mathbb{N}$-graded algebra and
let $A=\sigma(R)\langle x_1,\dots, x_n\rangle$ be a bijective skew
PBW extension of $R$ satisfying the fo\-llo\-wing two conditions:
\begin{enumerate}
\item[\rm (i)] $\sigma_i$ is a graded ring homomorphism and $\delta_i : R(-1) \to R$ is a graded $\sigma_i$-derivation for all $1\leq i  \leq n$,
where $\sigma_i$ and $\delta_i$ are as in Proposition \ref{prop23}.
\item[\rm (ii)]  $x_jx_i-c_{i,j}x_ix_j\in R_2+R_1x_1 +\cdots + R_1x_n$, as in (\ref{torcida4}) and $c_{i,j}\in R_0$.
\end{enumerate}
For $p\geq 0$, let $A_p$ the $K$-space generated by the set
\[\Bigl\{r_tx^{\alpha} \mid t+|\alpha|= p,\  r_t\in R_t \quad\text{and}\quad x^{\alpha}\in {\rm
Mon}(A)\Bigr\}.
\]
Then $A$ is a $\mathbb{N}$-graded algebra with graduation
\begin{equation}\label{eq.grad alg skew}
A=\bigoplus_{p\geq 0} A_p.
\end{equation}
\end{proposition}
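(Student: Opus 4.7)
The plan is to verify the two properties that make $A=\bigoplus_{p\geq 0}A_p$ an $\mathbb{N}$-graded algebra: first, that this is a vector-space direct sum decomposition of $A$, and second, that $A_p\cdot A_q\subseteq A_{p+q}$.

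For the direct sum decomposition, I would exploit that $A$ is a free left $R$-module on ${\rm Mon}(A)$, together with the grading $R=\bigoplus_{t\geq 0}R_t$. Any $a\in A$ has a unique expression $a=\sum_{\alpha}r_{\alpha}x^{\alpha}$ with $r_{\alpha}\in R$ almost all zero, and decomposing each $r_{\alpha}=\sum_{t}r_{\alpha,t}$ with $r_{\alpha,t}\in R_t$ gives a unique expression $a=\sum_{\alpha,t}r_{\alpha,t}x^{\alpha}$. Regrouping by $p=t+|\alpha|$ yields $a=\sum_{p}a_p$ with $a_p\in A_p$, and uniqueness of this writing shows that the sum is direct.

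For the multiplicative compatibility, I would reduce to checking that homogeneous generators multiply correctly, namely that for $r_s\in R_s$, $r'_t\in R_t$, and $x^{\alpha}, x^{\beta}\in {\rm Mon}(A)$ one has $(r_sx^{\alpha})(r'_tx^{\beta})\in A_{s+t+|\alpha|+|\beta|}$. The heart of the matter is the following two identities. Firstly, $x_i\cdot R_t\subseteq A_{t+1}$ for each $i$ and $t$: by Proposition \ref{prop23}, $x_ir_t=\sigma_i(r_t)x_i+\delta_i(r_t)$; hypothesis (i) ensures $\sigma_i(r_t)\in R_t$, whence $\sigma_i(r_t)x_i\in A_{t+1}$, and also that $\delta_i$ raises degree by one, so $\delta_i(r_t)\in R_{t+1}\subseteq A_{t+1}$. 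Secondly, $x_jx_i\in A_2$: hypothesis (ii) states $x_jx_i-c_{i,j}x_ix_j\in R_2+R_1x_1+\cdots+R_1x_n$ with $c_{i,j}\in R_0$, and every summand on the right-hand side is visibly in $A_2$.

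The main obstacle is then to propagate these two identities to show $x^{\alpha}\cdot(r_tx^{\beta})\in A_{|\alpha|+t+|\beta|}$, which I would do by induction on $|\alpha|$, iteratively applying the first identity to push scalars past variables and the second to straighten any resulting non-ordered monomial $x_ix^{\alpha'}$ back into ${\rm Mon}(A)$ without changing the total degree. The delicate point is that each straightening step produces additional summands of the form $r\,x^{\gamma}$ with $|\gamma|\leq|\alpha|+1$, so one must organize the recursion by a well-ordering on ${\rm Mon}(A)$ (the deglex order is well-suited to the PBW straightening) to guarantee termination while tracking that every intermediate summand stays in the intended graded piece. Once this is established, $K$-bilinearity extends multiplicative compatibility to all of $A_p$ and $A_q$, completing the proof.
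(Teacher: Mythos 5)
The paper does not reprove this proposition; it is imported verbatim from \cite{Hec}, so there is no in-text proof to compare against. Your sketch is the standard (and correct) argument for that result: directness of the sum follows from the left $R$-freeness of $A$ on ${\rm Mon}(A)$ together with the grading of $R$, and $A_pA_q\subseteq A_{p+q}$ follows from the two homogeneity facts you isolate ($\sigma_i(R_t)\subseteq R_t$, $\delta_i(R_t)\subseteq R_{t+1}$, and the degree-$2$ homogeneity of the relations $x_jx_i=c_{i,j}x_ix_j+\cdots$ with $c_{i,j}\in R_0$), propagated by the usual straightening induction on word length and inversions, which is exactly how the cited source argues.
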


\begin{definition}[\cite{Hec}, Definition 2.6]\label{defgrd} Let  $A=\sigma(R)\langle x_1,\dots, x_n\rangle$ be a bijective
skew PBW extension of an  $\mathbb{N}$-graded algebra
$R=\bigoplus_{m\geq 0}R_m$. We say that $A$ is a \emph{graded  skew
PBW extension} if $A$ satisfies the conditions (i) and (ii) in
Proposition \ref{prop24}.
\end{definition}

\begin{example}\label{ex.rig}
Let $A$ be a right double Ore extension of $K$, then $A$ is
isomorphic to the algebra $K \langle x_1,x_2 \rangle/(x_2x_1 -
p_{12}x_1x_2 -p_{11}x_1^2 - a_1x_1-a_2x_2-a_3)$, for some
$p_{12},p_{11},a_1,a_2,a_3 \in K$. We note that if $p_{12}\neq 0$
and $p_{11}=0$ then $A$ is a constant skew PBW extension, but $A$ is
not in general a graded skew PBW extension, nor a pre-commutative
skew PBW extension, nor a quasi-commutative skew PBW extension.
\end{example}

\begin{remark}\label{notagrd} Let $A = \sigma(R)\langle x_1, \dots, x_n \rangle$  be a graded skew PBW extension. Note that $R$ is connected
if and only if $A$ is connected.
\end{remark}

\section{Skew PBW extensions versus double Ore
extensions}\label{sec.versus}

Iterated Ore extension of bijective type (in \cite{Le} these
algebras are called iterated skew polynomial ring of bijective type)
are skew PBW extensions. Let $B= R[x_1; \sigma_1, \delta_1]$ be an
Ore extension of  $R$ and $C = B[x_2; \sigma_2,\delta_2]$ be an Ore
extension of $B$, in general $C$ is not a right double Ore extension
because $C$ might not have a relation of the form (\ref{R1}). These
double Ore extensions are presented as  iterated Ore extensions of
the form $R[x_1; \sigma_1, \delta_1][x_2;\sigma_2,\delta_2]$ in
\cite{Pau}. To best of our knowledge, there is no study of the
relations between skew PBW extensions and double Ore extensions.\

The Weyl algebra $A_2(K)$ is an iterated Ore extension, a skew PBW
extension, and a right double Ore extension. Indeed, since $A_2(K) =
K[t_1,t_2][x_1,\partial/\partial t_1][x_2,\partial/\partial t_2]$,
then this algebra is an iterated Ore extension of $K[t_1,t_2]$. Note
that, $x_ir = rx_i+
\partial r/\partial t_i$, $x_2x_1-x_1x_2= 0$, for any $r\in
K[t_1,t_2]$ and $1 \leq i \leq 2$. Thus, $A_2(K)$ is a skew PBW
extension of $K[t_1,t_2]$, with $\sigma_1=\sigma_2$ the identity map
in $K[t_1,t_2]$, $\delta_1= \partial/\partial t_1$ and
$\delta_2=\partial/\partial t_2$, where $\sigma_1$, $\sigma_2$,
$\delta_1$, $\delta_2$ are given as in Proposition \ref{prop23}.
$A_2(K)=\sigma(R)\langle x_1,x_2\rangle$ is a pre-commutative and
bijective skew PBW extension of derivation type. $A_2(K)$ is not a
graded skew PBW extension, nor a quasi-commutative skew PBW
extension. Note that $A_2(K)$ is a right double Ore extension of
$K[t_1,t_2]$, with $p_{12}=1$, $p_{11}=0$, $\tau_0=\tau_1=\tau_2=
0$, $\sigma_{11}$ and $\sigma_{22}$ the identity map in
$K[t_1,t_2]$, $\sigma_{12}=\sigma_{21}=0$, $\delta_1(r)=
\partial r/\partial t_1$ and $\delta_2(r)=\partial r/\partial t_2$, for all $r\in K[t_1,t_2]$;
where $p_{12}$, $p_{11}$, $\tau_0$, $\tau_1$, $\tau_2$ are given as
in (\ref{R1}) of Definition \ref{defore2} and $\sigma_{11}$,
$\sigma_{22}$, $\sigma_{12}$, $\sigma_{21}$, $\delta_1$, $\delta_2$
are given as in Equation (\ref{R2}). $A=R_P[x_1,x_2; \sigma,
\delta,\tau]$ is not a trimmed right double Ore extension since
$\delta_1(t_1)=1=\delta_2(t_2)$.\

The concept of double Ore extension (Definition \ref{defore2}) and
the skew PBW extension in two variables, $A =\sigma(R)\langle
x_1,x_2\rangle$, are similar because of the characteristics that
they have. The difference between these structures lies, in general
terms, in the following fact: in the
 right double Ore extension property (\ref{R1}), $x_2x_1= p_{12}x_1x_2 + p_{11}x_1^2 + \tau_1x_1 + \tau_2 x_2 +
\tau_0$, where $p_{12}, p_{11} \in K$ and $\tau_1, \tau_2, \tau_0
\in R,$ while in the property (\ref{torcida4}) of the definition of
skew PBW extension $x_2x_1 - c_{1,2}x_1x_2 \in R + Rx_1 + Rx_2$,
with $c_{1,2} \in R \setminus \{0\}$. If $p_{11} = 0$, it would be
necessary that $ 0\neq p_{12} = c_{1,2}\in K$, but this is not
always true. The following theorem gives the conditions for a double
Ore extension to be a skew PBW extension of an  algebra $R$.

\begin{theorem}\label{rel11}
Let $A = R_P[x_1,x_2,\sigma, \delta,\tau]$ be a right double Ore
extension of $R$ with $P = \{p_{12},0\}$, $p_{12}\neq 0$, $\sigma :=
\left(\begin{smallmatrix}
\sigma_{11} && 0 \\
0 && \sigma_{22} \\
\end{smallmatrix}\right)$,  where $\sigma_{11}(r)\neq 0$, $\sigma_{22}(r) \neq 0$, for all $r \in R \setminus \{0\}$ and  $\delta := \left(\begin{smallmatrix}
\delta_1  \\
\delta_2  \\
\end{smallmatrix}\right)$. Then $A$ is a skew PBW extension of $R$.
\end{theorem}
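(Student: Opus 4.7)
The plan is to verify directly the four conditions of Definition \ref{defpbwt} for $A$, using the structural data coming from Definition \ref{defore2}(i) together with the specific hypotheses on $P$, $\sigma$, and $\delta$. Since a right double Ore extension already encodes a generator set, a basis, and the straightening relations we need, the proof is a matter of extracting the right coefficients rather than doing any deep construction.

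First I would dispatch conditions (i) and (ii) of Definition \ref{defpbwt}. Condition (i), $R \subseteq A$, is immediate from Definition \ref{defore2}(ia). Condition (ii) is exactly Definition \ref{defore2}(iiia): $A$ is a left free $R$-module on $\{x_1^{\alpha_1} x_2^{\alpha_2} \mid \alpha_1, \alpha_2 \geq 0\}$, so setting $n = 2$ gives the required set $\mathrm{Mon}(A)$.

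Next I would handle condition (iii). Since $\sigma$ is diagonal with $\sigma_{12} = \sigma_{21} = 0$, specialising equation (\ref{R2}) to $x_1 r$ and $x_2 r$ yields
\[
x_1 r = \sigma_{11}(r)\, x_1 + \delta_1(r), \qquad x_2 r = \sigma_{22}(r)\, x_2 + \delta_2(r),
\]
for every $r \in R$. Set $c_{1,r} := \sigma_{11}(r)$ and $c_{2,r} := \sigma_{22}(r)$; the hypothesis that $\sigma_{11}(r)$ and $\sigma_{22}(r)$ are non-zero whenever $r \neq 0$ guarantees $c_{i,r} \in R \setminus \{0\}$, and $x_i r - c_{i,r} x_i = \delta_i(r) \in R$, as required by (\ref{torcida3}).

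Finally, for condition (iv) of Definition \ref{defpbwt} in the two-variable setting, I need to produce $c_{i,j}$ for each of the pairs $(1,2)$, $(2,1)$, $(1,1)$, $(2,2)$. The diagonal cases are trivial with $c_{i,i} = 1$. For $(i,j) = (1,2)$, substituting $p_{11} = 0$ into (\ref{R1}) gives
\[
x_2 x_1 - p_{12}\, x_1 x_2 = \tau_1 x_1 + \tau_2 x_2 + \tau_0 \in R + R x_1 + R x_2,
\]
so $c_{1,2} := p_{12} \in K \setminus \{0\} \subseteq R \setminus \{0\}$ works. For $(i,j) = (2,1)$, I would solve the same relation for $x_1 x_2$ (legitimate because $p_{12} \neq 0$) to obtain
\[
x_1 x_2 - p_{12}^{-1} x_2 x_1 = -p_{12}^{-1}(\tau_1 x_1 + \tau_2 x_2 + \tau_0) \in R + R x_1 + R x_2,
\]
so $c_{2,1} := p_{12}^{-1}$ works. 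Assembling all of these gives $A = \sigma(R)\langle x_1, x_2 \rangle$.

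The argument has essentially no hard step: the only place where the hypotheses are genuinely used are the non-vanishing of $\sigma_{11}(r), \sigma_{22}(r)$ (needed to keep $c_{i,r}$ in $R \setminus \{0\}$), the vanishing of the off-diagonal entries of $\sigma$ (so that $x_i r$ collapses to a single $x_i$-term plus a scalar), the vanishing of $p_{11}$ (so that no $x_1^2$ term appears in (\ref{R1}) and $c_{1,2}$ can be chosen in $K$), and $p_{12} \neq 0$ (so that $c_{2,1}$ exists). The main thing to be careful about is to cover the cases $(i,j) = (2,1)$ and $(i,i)$ explicitly, since (\ref{R1}) only gives $x_2 x_1$ directly.
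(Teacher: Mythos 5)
Your proposal is correct and follows essentially the same route as the paper's own proof: conditions (i) and (ii) from Definition \ref{defore2}(ia) and (iiia), condition (iii) via $c_{i,r} := \sigma_{ii}(r)$ using the diagonality of $\sigma$ and the non-vanishing hypothesis, and condition (iv) via $c_{1,2} := p_{12}$ and $c_{2,1} := p_{12}^{-1}$ from relation (\ref{R1}) with $p_{11}=0$. Your explicit treatment of the diagonal pairs $(i,i)$ is a minor tidiness the paper omits, but the argument is the same.
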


\begin{proof}
Since $A$ is a right double Ore extension of $R$, then $R\subseteq
A$ and $A$ is a left free $R$-module with
$\{x_1^{\alpha_1}x_2^{\alpha_2} \mid\alpha_1\geq 0,\alpha_2\geq
0\}$. Therefore the conditions {\rm (i)} and {\rm (ii)} of
Definition \ref{defpbwt} are satisfied. Furthermore,  $x_1r =
\sigma_{11}(r)x_1 + \sigma_{12}(r)x_2 + \delta_1(r)  =
\sigma_{11}(r)x_1 + \delta_1(r), $  for all $r \in R\setminus\{0\}$,
and $x_2r = \sigma_{21}(r)x_1+\sigma_{22}(r)x_2+\delta_2(r) =
\sigma_{22}(r)x_2+\delta_2(r),$ where $\sigma_{11}(r),\sigma_{22}(r)
\neq 0$. Assume $c_{1,r}=\sigma_{11}(r)$ and $c_{2,r}=
\sigma_{22}(r)$, then we have (\ref{torcida3}). Now by (\ref{R1})
$$\begin{array}{lll}
x_2x_1  & = & p_{12}x_1x_2 + p_{11}x_1^2 + \tau_1x_1 + \tau_2 x_2 + \tau_0  \\
        & = & p_{12}x_1x_2 + \tau_1x_1 + \tau_2 x_2 + \tau_0,
\end{array}$$
 where $0\neq p_{12} = c_{1,2} \in K $ and $\tau_1, \tau_2, \tau_0 \in
 R$. Also, $$\begin{array}{lll}
p_{12}^{-1}x_2x_1  & = & x_1x_2 + p_{12}^{-1}\tau_1x_1 + p_{12}^{-1}\tau_2 x_2 + p_{12}^{-1}\tau_0  \\
                   & = & x_1x_2 + \tau_1'x_1 +  \tau_1'x_2 + \tau_0',
\end{array}$$
where $0\neq p_{12}^{-1} = c_{2,1} \in K $ and $\tau_1', \tau_2',
\tau_0' \in R$, with which we obtain (\ref{torcida4}).\\
\end{proof}

\begin{theorem}\label{fff} Let $A= \sigma(R)\langle x_1,x_2\rangle$ be a connected graded skew PBW extension of an algebra
$R$. Then $A$ is a  connected graded double Ore extension of $R$.
\end{theorem}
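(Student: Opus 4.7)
The plan is to verify each condition of Definition \ref{defore2}(i) with $\deg(x_1)=\deg(x_2)=1$, so as to exhibit $A$ as a connected graded \emph{right} double Ore extension of $R$, and then to invoke Proposition \ref{prop2.1do}(iii) to promote this to a full double Ore extension.

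First I would read off the matrix data $\{\sigma,\delta\}$ from the skew PBW structure. Proposition \ref{prop23} gives $x_i r = \sigma_i(r)x_i + \delta_i(r)$ for $i=1,2$ and $r\in R$; comparing with (\ref{R2}), this identifies $\sigma=\mathrm{diag}(\sigma_1,\sigma_2)$, $\sigma_{12}=\sigma_{21}=0$, and $\delta=\left(\begin{smallmatrix}\delta_1\\ \delta_2\end{smallmatrix}\right)$. In particular $x_i R\subseteq R x_i + R$, which yields condition (iva). Conditions (ia) and (iiia) are immediate from Definition \ref{defpbwt}, since the standard monomials $\{x_1^{\alpha_1}x_2^{\alpha_2}\}$ generate $A$ over $R$ and form a left free $R$-basis.

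The heart of the argument is condition (iia). By Proposition \ref{prop24}(ii),
$$x_2x_1 - c_{1,2}x_1x_2 \in R_2 + R_1 x_1 + R_1 x_2, \qquad c_{1,2}\in R_0.$$
Since $A$ is connected, Remark \ref{notagrd} forces $R_0=K$, so $c_{1,2}\in K\setminus\{0\}$. Writing the remainder as $\tau_0 + \tau_1 x_1 + \tau_2 x_2$ with $\tau_0\in R_2$ and $\tau_1,\tau_2\in R_1$, we recover (\ref{R1}) with \emph{scalar} parameters $p_{12}=c_{1,2}\in K$ and $p_{11}=0$. Gradedness of all defining relations then follows at once: $\sigma_i$ is graded of degree $0$, $\delta_i\colon R(-1)\to R$ is graded of degree $0$, and the quadratic commutation just displayed is homogeneous of total degree $2$ under $\deg(x_i)=1$.

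At this point $A$ is exhibited as a connected graded right double Ore extension. To finish, I would apply Proposition \ref{prop2.1do}(iii). Because $p_{12}=c_{1,2}\neq 0$ and the graded skew PBW extension is bijective, both $\sigma_1$ and $\sigma_2$ are invertible; Remark \ref{detll}(ii) then specialises to $\det\sigma = \sigma_1\sigma_2$, which is invertible. Proposition \ref{prop2.1do}(iii) thus yields that $\sigma$ is invertible and that $A$ is a full double Ore extension of $R$. The main obstacle --- really the only nontrivial point --- is forcing the skew PBW commutation (\ref{torcida4}) into the strict shape (\ref{R1}) with scalar coefficients; this is precisely where the connectedness hypothesis on $A$ is consumed, in order to place $c_{1,2}$ inside $K$ rather than leaving it as a general element of $R_0$.
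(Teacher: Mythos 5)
Your proposal is correct and follows essentially the same route as the paper: conditions (ia), (iiia), (iva) are read off from the skew PBW axioms and Proposition \ref{prop23}, connectedness via Remark \ref{notagrd} forces $c_{1,2}\in R_0=K$ so that (\ref{R1}) holds with $p_{12}=c_{1,2}$ and $p_{11}=0$, and Proposition \ref{prop2.1do}(iii) upgrades the right double Ore extension to a double Ore extension. Your additional verification that $\det\sigma=\sigma_1\sigma_2$ is invertible (via bijectivity of a graded skew PBW extension and Remark \ref{detll}(ii)) is a harmless extra safeguard not present in the paper's proof.
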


\begin{proof}
Conditions (ia) and (iiia) of Definition \ref{defore2} are obtained
from conditions (i) and (ii) of Definition \ref{defpbwt}. From
(\ref{torcida3}) we have that for each $r\in R$ $x_1r=c_{1,r}x_1 +
t_{1,r}=\sigma_1(r)x_1+\delta_1(r)$ and $x_2r=c_{2,r}x_2 + t_{2,r}=
\sigma_2(r)x_2+\delta_2(r)$, with $c_{1,r}, c_{2,r},t_{1,r},
t_{2,r}\in R$ and $\sigma_1$, $\sigma_2$, $\delta_1$, $\delta_2$  as
in Proposition \ref{prop23}. Then we have that $x_1r+x_2r=c_{1,r}x_1
+ c_{2,r}x_2+(t_{1,r}+t_{2,r})$. Thus, $x_1R+x_2R\subseteq R_1x_1 +
Rx_2 + R$. So, (\ref{4ore2}) follows. Since $A$ is connected, by
Remark \ref{notagrd} we have that $R\bigoplus_{m\geq 0}R_m$ is
connected. So $R_0=K$. By Definition \ref{defgrd} and Proposition
\ref{prop24}-(ii) we have that $0 \neq c_{1,2} \in R_0 = K$. For
$p_{11} = 0$ and $p_{12} = c_{1,2}$ we have that $x_2x_1 =
c_{1,2}x_1x_2+\tau_1x_1+\tau_2x_2 + \tau_0$, where $\tau_1,\tau_2$
and $\tau_3 \in R$, i.e., $A$ is a double Ore extension of $R$. Now,
by Proposition \ref{prop24} and Definition \ref{defgrd} it's true
that the above relations are homogeneous. Thus, $A$ is a connected
graded right double Ore extension of $R$. By Proposition
\ref{prop2.1do}-(iii), $A= R_P[x_1,x_1;\sigma,\delta,\tau]$ is a
double Ore extension of $R$ where $P=\{c_{1,2},0\}$,
$$\sigma(r) := \left(\begin{matrix}
\sigma_{11}(r) && 0 \\
0 && \sigma_{22}(r) \\
\end{matrix}\right)=\left(\begin{matrix}
c_{1,r} && 0 \\
0 && c_{2,r} \\
\end{matrix}\right)=\left(\begin{matrix}
\sigma_1(r) && 0 \\
0 && \sigma_1(r) \\
\end{matrix}\right),$$  $$\delta(r) := \left(\begin{matrix}
t_{1,r}  \\
t_{2,r}  \\
\end{matrix}\right)= \left(\begin{matrix}
\delta_1(r)  \\
\delta_1(r) \\
\end{matrix}\right),$$  and $\tau =
\{\tau_0,\tau_1,\tau_2\}$, with $\tau_0 \in R_2$ and
$\tau_1$,$\tau_2 \in R_1$.
\end{proof}

\begin{corollary}\label{cor.traim} Let $A= \sigma(R)\langle x_1,x_2\rangle$ be a connected graded quasi-commutative skew PBW extension of
$R$. Then $A$ is a connected graded trimmed right double Ore
extension of $R$.
\end{corollary}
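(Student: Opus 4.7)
The plan is to invoke Theorem \ref{fff} to obtain the double Ore extension structure, and then use the quasi-commutativity hypothesis to force the derivation $\delta$ and the tail $\tau$ to vanish, so that the extension is trimmed.

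First, I would apply Theorem \ref{fff} directly: since $A = \sigma(R)\langle x_1, x_2 \rangle$ is a connected graded skew PBW extension of $R$ (every quasi-commutative skew PBW extension is in particular a skew PBW extension in the sense of Definition \ref{defpbwt}), we immediately conclude that $A$ is a connected graded double Ore extension $A = R_P[x_1,x_2;\sigma,\delta,\tau]$ of $R$ with $P = \{c_{1,2}, 0\}$, $\sigma(r) = \mathrm{diag}(\sigma_1(r), \sigma_2(r))$, $\delta(r) = (\delta_1(r), \delta_1(r))^{T}$ (this is the form given in the proof of Theorem \ref{fff}), and $\tau = \{\tau_0, \tau_1, \tau_2\}$ with $\tau_0 \in R_2$ and $\tau_1, \tau_2 \in R_1$.

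Next I would use the quasi-commutative hypothesis to show that $\delta$ and $\tau$ vanish. By Definition \ref{sigmapbwderivationtype}(a), the relation (\ref{rel1.quasi}) gives $x_i r = c_{i,r} x_i$ for each $r \in R$ and $i = 1, 2$. Comparing with the Ore-style identity $x_i r = \sigma_i(r) x_i + \delta_i(r)$ from Proposition \ref{prop23}, we obtain $\delta_i(r) = 0$ for every $r \in R$ and $i = 1, 2$. Following the notation in the proof of Theorem \ref{fff}, where $\delta(r) = (t_{1,r}, t_{2,r})^{T} = (\delta_1(r), \delta_1(r))^{T}$, this forces $\delta$ to be the zero map from $R$ to $R^{\oplus 2}$. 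Similarly, relation (\ref{rel2.quasi}) gives $x_2 x_1 = c_{1,2} x_1 x_2$; comparing with the double Ore extension relation (\ref{R1}) that $A$ satisfies under Theorem \ref{fff}, namely $x_2 x_1 = c_{1,2} x_1 x_2 + \tau_1 x_1 + \tau_2 x_2 + \tau_0$, we conclude $\tau_0 = \tau_1 = \tau_2 = 0$, i.e., $\tau$ consists of zero elements.

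Since both $\delta$ and $\tau$ are zero, by the definition of a trimmed right double Ore extension, $A = R_P[x_1, x_2; \sigma]$ is a connected graded trimmed right double Ore extension of $R$, which completes the proof. There is essentially no obstacle here: the result is a direct corollary of Theorem \ref{fff}, and the main content reduces to matching up the quasi-commutative relations (\ref{rel1.quasi}) and (\ref{rel2.quasi}) with the double Ore extension structure provided by Theorem \ref{fff}.
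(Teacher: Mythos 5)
Your proposal is correct and follows essentially the same route as the paper: both apply Theorem \ref{fff} to obtain the double Ore extension structure and then read off $\delta=0$ and $\tau_0=\tau_1=\tau_2=0$ from the quasi-commutative relations (\ref{rel1.quasi}) and (\ref{rel2.quasi}). The only difference is cosmetic --- you carry over the paper's (typographical) repetition of $\delta_1$ and $\sigma_1$ in both matrix entries, where the intended entries are $\delta_1,\delta_2$ and $\sigma_1,\sigma_2$ --- but this does not affect the argument.
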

\begin{proof}
By Theorem \ref{fff} we have that $A$ is a connected graded double
Ore extension of $R$. Since $A$ is quasi-commutative, then  there
exists $c_{1,r}, c_{2,r},c_{1,2} \in R\ \backslash\ \{0\}$ such that
\begin{equation}
x_1r=c_{1,r}x_1,\quad   x_2r=c_{2,r}x_2 \text{ for all } r\in R\
\backslash\ \{0\}
\end{equation}
and
\begin{equation}
x_jx_i=c_{i,j}x_ix_j.
\end{equation}
Therefore, $\begin{array}{llllll} \delta(r)&=& \left(\begin{matrix}
\delta_{1}(r)  \\
\delta_{2}(r)  \\
\end{matrix}\right) &=& \left(\begin{matrix}
0 \\
0 \\
\end{matrix}\right),
\end{array}$
and $\tau_1=\tau_2=\tau_0=0.$ Thus, $A=R_P[x_1,x_2;\sigma]$ is a
trimmed right double Ore extension, where $P=\{c_{1,2}, 0\}$ and
$$\sigma(r) = \left(\begin{smallmatrix}
\sigma_{11}(r) && 0 \\
0 && \sigma_{22}(r) \\
\end{smallmatrix}\right)= \left(\begin{smallmatrix}
\sigma_{1}(r) && 0 \\
0 && \sigma_{2}(r) \\
\end{smallmatrix}\right),$$ with $\sigma_{1}$ and $\sigma_{2}$ as in
Proposition \ref{prop23}.
\end{proof}

\begin{example}\label{asejeml} The following algebras are connected graded
skew PBW extensions (see \cite{Hec,Hect}). By Theorem \ref{fff}
these algebras are also connected graded right double Ore
extensions.

\begin{enumerate}
\item \emph{Homogenized enveloping algebra}. Let $\mathcal{G}$ be a two dimensional Lie algebra over
$K$ with basis $\{x_1, x_2\}$ and $\mathcal{U}(\mathcal{G})$ its
enveloping algebra. The \emph{homogenized enveloping algebra} of
$\mathcal{G}$ is $\mathcal{A}(\mathcal{G}):= T(\mathcal{G}\oplus
Kz)/\langle R\rangle$, where $T(\mathcal{G}\oplus Kz)$ is the tensor
algebra, $z$ is a new variable, and $R$ is spanned by $\{z\otimes
x-x\otimes z\mid x\in \mathcal{G}\}\cup \{x\otimes y-y\otimes
x-[x,y]\otimes z\mid x,y\in \mathcal{G}\}$.
$\mathcal{A}(\mathcal{G})$ is not an iterated Ore extension. We get
that $\mathcal{A}(\mathcal{G})$ is a graded right double Ore
extension of $K[z]$.
\item \emph{Diffusion algebra}. The  diffusion algebra $\mathcal{D}$ generated by $\{D_1, D_2, x_1, x_2\}$ over $K$ with relations
$x_2x_1 = x_1x_2$, $x_iD_j = D_jx_i$, $1 \leq i, j \leq 2$; \quad
$c_{1,2}D_1D_2 - c_{2,1}D_2D_1 = x_2D_1 - x_1D_2$, $c_{1,2}$,
$c_{2,1}\in  K\setminus\{0\}$, is a graded non quasi-commutative
skew PBW extension of $K[x_1, x_2]$. Observe that $\mathcal{D}$ is
not an iterated Ore extension. $\mathcal{D}$ is a graded right
double Ore extension of $K[x_1, x_2]$.
\item \emph{Algebra of linear partial q-dilation operators}. For a
fixed $q \in K\setminus \{0\}$, the algebra of linear partial
$q$-dilation operators with polynomial coefficients is $$K[t_1,\dots
,t_n][H^{(q)}_1, H^{(q)}_2],\quad n \geq 2,$$ subject to the
relations: $t_jt_i = t_it_j$, $1 \leq i < j \leq n$;\quad $H^{(q)}_i
t_i = qt_iH^{(q)}_i$, $1 \leq i \leq 2$;\quad $H^{(q)}_j t_i =
t_iH^{(q)}_j$, $i \neq j$;\quad $H^{(q)}_j H^{(q)}_i =
H^{(q)}_iH^{(q)}_j$, $1 \leq i < j\leq 2$. This algebra is a
connected graded quasi-commutative skew PBW extension of
$K[t_1,\dots ,t_n]$. By Corollary \ref{cor.traim}, $K[t_1,\dots
,t_n][H^{(q)}_1, H^{(q)}_2]$ is a connected graded trimmed right
double Ore extension of $K[t_1,\dots,t_n]$.
\item \emph{Quantum plane}. This algebra is denoted by $\mathcal{O}_2(\lambda_{ji})$ and is
generated by $x_1,x_2$ subject to the relation: $x_2x_1
=\lambda_{21}x_1x_2$, with $\lambda_{21}\in K\setminus \{0\}$. This
algebra is a connected graded quasi-commutative skew PBW extension
of $K$. By Corollary \ref{cor.traim}, $\mathcal{O}_2(\lambda_{ji})$
is a connected graded trimmed right double Ore extension of $K$.
\end{enumerate}
\end{example}

Next we show that some right double Ore extensions are  graded skew
PBW extensions.

\begin{theorem}\label{rel112} Let $R= \bigoplus_{m\geq 0}R_m$ be an $\mathbb{N}$-graded algebra and let $A=R_P[x_1,x_2,\sigma,\delta,\tau]$
be a graded right double Ore extension of $R$. If $P=\{p_{12},0\}$,
$p_{12}\neq 0$ and $\sigma := \left(\begin{smallmatrix}
\sigma_{11} && 0 \\
0 && \sigma_{22} \\
\end{smallmatrix}\right)$, where $\sigma_{11}$, $\sigma_{22}$ are automorphism of $R$, then  $A$ is a graded skew PBW extension of $R$.
\end{theorem}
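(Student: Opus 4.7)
The strategy is to reduce the statement to Theorem \ref{rel11} for the ungraded skew PBW structure, and then to harvest the extra information built into the graded hypothesis on $A$ to verify bijectivity and the two conditions of Proposition \ref{prop24}. First I would apply Theorem \ref{rel11}: since $\sigma_{11}$ and $\sigma_{22}$ are automorphisms of $R$, they are in particular injective, so $\sigma_{11}(r) \neq 0$ and $\sigma_{22}(r) \neq 0$ for every $r \in R \setminus \{0\}$, and the hypotheses of that theorem are satisfied. This yields that $A = \sigma(R)\langle x_1, x_2 \rangle$ is a skew PBW extension of $R$, whose associated data from Proposition \ref{prop23} are $\sigma_1 = \sigma_{11}$, $\sigma_2 = \sigma_{22}$, and the derivations $\delta_1, \delta_2$ read off from $\delta$.

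Next I would verify that this skew PBW extension is bijective in the sense of Definition \ref{sigmapbwderivationtype}(b). The endomorphisms $\sigma_1, \sigma_2$ are bijective by hypothesis. The constant $c_{1,2}$ appearing in (\ref{torcida4}) is, by the computation in the proof of Theorem \ref{rel11}, exactly $p_{12}$, which lies in $K \setminus \{0\}$ and is therefore invertible.

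Finally I would check the two conditions of Proposition \ref{prop24}. Since $A$ is a \emph{graded} right double Ore extension with $\deg(x_1) = \deg(x_2) = 1$, the relation $x_i r = \sigma_{ii}(r) x_i + \delta_i(r)$ is homogeneous; hence for $r \in R_m$ both sides sit in degree $m+1$, forcing $\sigma_{ii}(r) \in R_m$ and $\delta_i(r) \in R_{m+1}$. This shows $\sigma_i$ is a graded ring homomorphism and $\delta_i : R(-1) \to R$ is a graded $\sigma_i$-derivation, establishing Proposition \ref{prop24}(i). For the quadratic relation, using $p_{11} = 0$,
\[
x_2 x_1 - p_{12}\, x_1 x_2 \;=\; \tau_1 x_1 + \tau_2 x_2 + \tau_0,
\]
is homogeneous of degree $2$, which forces $\tau_1, \tau_2 \in R_1$ and $\tau_0 \in R_2$, while $c_{1,2} = p_{12} \in K \subseteq R_0$. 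Thus $x_2 x_1 - c_{1,2} x_1 x_2 \in R_2 + R_1 x_1 + R_1 x_2$ with $c_{1,2} \in R_0$, which is exactly Proposition \ref{prop24}(ii). Definition \ref{defgrd} then gives the conclusion.

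I do not foresee a serious obstacle in this argument: the heavy lifting is already done by Theorem \ref{rel11} and by the definition of a graded right double Ore extension. The only thing that really needs to be said is that the homogeneity of each defining relation transfers cleanly to the graded-skew-PBW axioms, and that the automorphism hypothesis on $\sigma_{11}, \sigma_{22}$ is precisely what promotes ``skew PBW'' to ``bijective skew PBW.''
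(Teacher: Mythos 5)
Your proposal is correct and follows essentially the same route as the paper: invoke Theorem \ref{rel11} to get the skew PBW structure with $c_{i,r}=\sigma_{ii}(r)$ and $c_{1,2}=p_{12}$, use bijectivity of $\sigma_{11},\sigma_{22}$ for condition (b) of Definition \ref{sigmapbwderivationtype}, and then deduce conditions (i) and (ii) of Proposition \ref{prop24} from the homogeneity of the defining relations. Your version is in fact slightly more careful than the paper's, since you explicitly note that automorphisms satisfy the nonvanishing hypothesis of Theorem \ref{rel11} and you spell out the degree bookkeeping ($\tau_1,\tau_2\in R_1$, $\tau_0\in R_2$, $\delta_i(R_m)\subseteq R_{m+1}$) that the paper leaves implicit.
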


\begin{proof}
By Theorem \ref{rel11} and its proof, $A$ is a skew PBW extension of
$R$ with $c_{1,r}=\sigma_{11}(r)$, $c_{2,r}= \sigma_{22}(r)$,
$p_{12}^{-1} = c_{2,1}\in K\setminus \{0\}$ and $0\neq p_{12} =
c_{1,2}\in K\setminus \{0\}$. Since $\sigma_{11}$ and $\sigma_{22}$
are bijective, we have that $A$ is a bijective skew PBW extension.
Furthermore, all relations of $A=R_P[x_1,x_2,\sigma,\delta,\tau]$
are homogeneous since $A$ is a graded right double Ore extension of
$R$. Then, conditions (i) and (ii) in Proposition \ref{prop24} are
satisfied. Thus, $A$ is a graded skew PBW extension of $R$.
\end{proof}

\begin{example}\label{oreejm1}
Let $R=K[t]$. The double Ore extension of $R$, $A^1:=A^1(p,a,b,c)$,
is the algebra generated by $t, x_1, x_2$  defined by the relations
 $$\begin{array}{ll}
x_2x_1 & = px_1x_2 +\dfrac{bc}{1-b}(pb-1)tx_1+ at^2,\\
x_1t   & = btx_1, \\
x_2t    &= b^{-1}tx_2+ct^2,
 \end{array}$$
 where $a,b,c,p \in K$ and $p\neq 0$, $b\neq 0,1$ (see \cite[Example 4.1]{Zhang}). The homomorphism $\sigma$ is determined by
 $\sigma(t) = \left(\begin{smallmatrix}
bt && 0 \\
0 && b^{-1}t \\
\end{smallmatrix}\right)$. In this case $\sigma_{12}=\sigma_{21}=0$ and $\sigma_{11}$, $\sigma_{22}$ are algebra
automorphisms. The derivation is determined by  $\delta(t) =
\left(\begin{smallmatrix}
0 \\
ct^2 \\
\end{smallmatrix}\right)$. The parameter $P$ is $(p,0)$ and the tail is $\tau = \{\dfrac{bc}{1-b}(pb-1)t, 0,at^2\}$.
Then, from Theorem \ref{rel11} we have that $A^1$ is a skew PBW
extension of $K[t]$. Assuming that deg$(t)=$deg$(x_1)=$deg$(x_2)=1$,
we have that  $A^1$ is a connected graded skew PBW extension.
\end{example}

\begin{example}\label{oreejm3} The double Ore extension $A^3:=A^3(a)$ of $K[t]$ (see \cite[Example
4.1]{Zhang}) defined by the relations

$$\begin{array}{ll}
x_2x_1 & = x_1x_2,\\
x_1t   & = atx_1+tx_2, \\
x_2t    &= atx_2,
 \end{array}$$
 where  $a\in K$ and $a\neq 0$, is a skew PBW extension of $K[x_2]$. In fact, we have that
 $tx_2=a^{-1}x_2t$.  Hence
 $$\begin{array}{ll}
x_1x_2 & = x_2x_1,\\
tx_2    &= a^{-1}x_2t,\\
x_1t   & = atx_1+a^{-1}x_2t.
 \end{array}$$
Assuming that deg$(t)=$deg$(x_1)=$deg$(x_2)=1$, we have that  $A^3$
is a connected graded skew PBW extension.
\end{example}

By Theorem \ref{fff} we have that if $A= \sigma(R)\langle
x_1,x_2\rangle$ is a connected graded skew PBW extension of an
algebra $R$, then $A$ is a connected graded double Ore extension of
$R$. In the next theorem, we give necessary and sufficient
conditions for a connected graded  double Ore extensions of a
connected algebra $R$ to be a graded  skew PBW extension.

\begin{theorem}\label{relaciondoblesgrad} Let $A=R_P[x_1,x_2,\sigma]$ be a graded double Ore extension of a connected algebra $R$. Then  $A$ is a graded
 skew PBW extension of $R$ if and only if
$p_{12}\neq 0$, $p_{11}=0$, $\sigma_{11}$, $\sigma_{22}$ are
automorphisms of $R$ and $\sigma_{12}(r)=\sigma_{21}(r)=0$, for all
$r\in R$.
\end{theorem}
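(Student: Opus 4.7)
The plan is to prove both implications by exploiting the uniqueness of the left $R$-basis $\{x_1^{\alpha_1}x_2^{\alpha_2}\}$ common to both structures, and then reading off the constraints.

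For the sufficiency direction, I would simply invoke Theorem \ref{rel112}. Indeed, under the hypotheses $p_{12}\neq 0$, $\sigma_{12}=\sigma_{21}=0$ and $\sigma_{11},\sigma_{22}\in\operatorname{Aut}(R)$, the trimmed case $A=R_P[x_1,x_2;\sigma]$ is a special instance (with $\delta=0$ and $\tau=0$) of a graded right double Ore extension satisfying the hypotheses of Theorem \ref{rel112}. Note that $p_{11}=0$ is already built into $P=\{p_{12},0\}$. Hence $A$ is a graded skew PBW extension of $R$.

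For the necessity direction, assume $A=R_P[x_1,x_2;\sigma]$ is simultaneously a graded double Ore extension and a graded skew PBW extension $\sigma(R)\langle x_1,x_2\rangle$ of $R$. Since $A$ is trimmed, the defining relations give $x_ir=\sigma_{i1}(r)x_1+\sigma_{i2}(r)x_2$ for $r\in R$ and $x_2x_1=p_{12}x_1x_2+p_{11}x_1^2$. On the other hand, from Definition \ref{defpbwt}-(iii) and Proposition \ref{prop23} together with uniqueness of the left $R$-expansion in the basis $\{x_1^{\alpha_1}x_2^{\alpha_2}\}$, I can write $x_1r=\sigma_1(r)x_1+\delta_1(r)$ and $x_2r=\sigma_2(r)x_2+\delta_2(r)$ with $\sigma_i(r),\delta_i(r)\in R$. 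Comparing coefficients of $1$, $x_1$, $x_2$ in both expressions for $x_1r$ forces $\sigma_{12}(r)=0$ and $\delta_1(r)=0$, whence $\sigma_1=\sigma_{11}$; analogously $\sigma_{21}(r)=0$, $\delta_2(r)=0$ and $\sigma_2=\sigma_{22}$.

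Next, I would compare the two descriptions of $x_2x_1$. The skew PBW condition (\ref{torcida4}) yields $x_2x_1-c_{1,2}x_1x_2\in R+Rx_1+Rx_2$; subtracting from $x_2x_1=p_{12}x_1x_2+p_{11}x_1^2$ gives $(p_{12}-c_{1,2})x_1x_2+p_{11}x_1^2\in R+Rx_1+Rx_2$. By freeness of $\{x_1^{\alpha_1}x_2^{\alpha_2}\}$, the coefficients of $x_1^2$ and $x_1x_2$ must vanish, so $p_{11}=0$ and $p_{12}=c_{1,2}$. Since $A$ is a graded skew PBW extension of the connected algebra $R$, Proposition \ref{prop24}-(ii) gives $c_{1,2}\in R_0=K\setminus\{0\}$, so $p_{12}\neq 0$. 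Finally, the bijectivity clause built into Definition \ref{defgrd} forces each $\sigma_i$ to be bijective, so $\sigma_{11}=\sigma_1$ and $\sigma_{22}=\sigma_2$ are automorphisms of $R$.

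The main obstacle is conceptual rather than computational: one must recognize that the trimmed double Ore extension relations already encode $\delta=0$, so that the would-be derivation part of the skew PBW extension is entirely captured by the off-diagonal entries $\sigma_{12}$, $\sigma_{21}$; the rest is an extraction of coefficients against the common PBW-basis, which proceeds by direct comparison.
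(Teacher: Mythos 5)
Your proof is correct and follows essentially the same route as the paper: for necessity you compare the double Ore relations against the skew PBW relations and extract the constraints, and for sufficiency you invoke Theorem \ref{rel112}. You are in fact more careful than the paper, which merely asserts that the equations ``become'' diagonal, whereas you justify this by comparing coefficients against the common free left $R$-basis $\{x_1^{\alpha_1}x_2^{\alpha_2}\}$ --- that freeness argument is exactly the missing justification. One point of divergence worth flagging: you read the notation $R_P[x_1,x_2;\sigma]$ literally as a \emph{trimmed} extension ($\delta=0$, $\tau=0$), which is consistent with the paper's own notational convention, but the paper's proof of this theorem (and the existence of the separate Theorem \ref{teo.traim} for the trimmed case) shows the intended scope is a general graded double Ore extension $R_P[x_1,x_2;\sigma,\delta,\tau]$ with possibly nonzero $\delta$ and $\tau$. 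Fortunately your argument survives unchanged in that generality: the $x_2$-coefficient of $x_1r$ still forces $\sigma_{12}(r)=0$ (the constant terms now simply match as $\delta_1(r)=\delta_1(r)$ instead of forcing $\delta_1=0$), and the $x_1^2$ and $x_1x_2$ coefficients of $x_2x_1$ still force $p_{11}=0$ and $p_{12}=c_{1,2}\neq 0$, with $\tau_1,\tau_2,\tau_0$ absorbed into $R+Rx_1+Rx_2$. So the only adjustment needed is to drop the claim that $\delta_i=0$, which is not part of the conclusion anyway.
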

\begin{proof}
Let $A=R_P[x_1,x_2,\sigma]$ be a graded  double Ore extension of a
connected algebra $R$. Then from (\ref{R2}),
\begin{equation}\label{R2grad}
x_1r=\sigma_{11}(r)x_1+\sigma_{12}(r)x_2+\delta_1(r)\quad \text{ and
}\quad x_2r=\sigma_{21}(r)x_1+\sigma_{22}(r)x_2+\delta_2(r),
\end{equation}
for all $r \in R$.\\
From (\ref{R1}),
\begin{equation}\label{R1grad}
x_2x_1 = p_{12}x_1x_2 + p_{11}x_1^2 + \tau_1x_1 + \tau_2x_2 +
\tau_0,
\end{equation}
where $p_{12}, p_{11}\in K$ and $\tau_1, \tau_2, \tau_0 \in R$.\\
Since $A$ is graded, the relations (\ref{R2grad}) and (\ref{R1grad})
are homogeneous. If $A$ is a graded skew PBW extension of $R$, then
  equations (\ref{R2grad}) and (\ref{R1grad}) become
\begin{equation}\label{R2quasi}
x_1r=\sigma_{11}(r)x_1+\delta_1(r)=
\sigma_{1}(r)x_1+\delta_1(r)\quad \text{ and } \quad
x_2r=\sigma_{22}(r)x_2+\delta_2(r)=\sigma_{2}(r)x_2+\delta_2(r),
\end{equation}
where $\sigma_1$, $\sigma_2$, $\delta_1$, $\delta_2$ are as in Proposition \ref{prop23};\\
\begin{equation}\label{R1quasi} x_2x_1 = p_{12}x_1x_2 + \tau_1x_1 + \tau_2x_2 +
\tau_0,
\end{equation}
Therefore, $p_{12}\neq 0$, $p_{11}=0$, $\sigma_{11}=\sigma_1$,
$\sigma_{22}=\sigma_2$
are automorphism of $R$ and $\sigma_{12}(r)=\sigma_{21}(r)=0$, for all $r\in R$.\\
For the converse implication, suppose that $p_{12}\neq 0$,
$p_{11}=0$, $\sigma_{11}$, $\sigma_{22}$ are automorphism of $R$ and
$\sigma_{12}(r)=\sigma_{21}(r)=0$, for all $r\in R$. Then by Theorem
\ref{rel112}, $A$ is a graded skew PBW extension of $R$.
\end{proof}

By Corollary \ref{cor.traim}, connected graded quasi-commutative
skew PBW extensions of $R$ are connected graded trimmed right double
Ore extensions of $R$. In the next theorem, we give necessary and
sufficient conditions for a connected graded trimmed right double
Ore extension to be a connected graded quasi-commutative skew PBW
extension.

\begin{theorem}\label{teo.traim} Let $A=R_P[x_1,x_2,\sigma]$ be a graded trimmed right double Ore extension
of a connected algebra $R$. Then  $A$ is a graded quasi-commutative
skew PBW extension of $R$ if and only if $p_{12}\neq 0$, $p_{11}=0$,
$\sigma_{11}$, $\sigma_{22}$ are automorphism of $R$ and
$\sigma_{12}(r)=\sigma_{21}(r)=0$, for all $r\in R$.
\end{theorem}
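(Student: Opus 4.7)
The plan is to mirror the argument used for Theorem \ref{relaciondoblesgrad}, adapted to the trimmed situation where $\delta$ and $\tau$ already vanish identically. Because $A=R_P[x_1,x_2;\sigma]$ is trimmed, the defining data reduce, for every $r\in R$, to the homogeneous relations
\begin{align*}
x_1r &= \sigma_{11}(r)x_1+\sigma_{12}(r)x_2,\\
x_2r &= \sigma_{21}(r)x_1+\sigma_{22}(r)x_2,\\
x_2x_1 &= p_{12}x_1x_2+p_{11}x_1^2.
\end{align*}
I will treat the two implications separately, and use Theorem \ref{rel112} to supply the converse.

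For the forward direction, suppose that $A$ is a graded quasi-commutative skew PBW extension of $R$. By Definition \ref{sigmapbwderivationtype}(a) there exist $c_{1,r}, c_{2,r}, c_{1,2}\in R\setminus\{0\}$ with $x_1r=c_{1,r}x_1$, $x_2r=c_{2,r}x_2$ and $x_2x_1=c_{1,2}x_1x_2$. Matching these against the three displayed identities and using uniqueness of expression in the left free $R$-basis $\{x_1^{\alpha_1}x_2^{\alpha_2}\}$, one obtains $\sigma_{12}(r)=\sigma_{21}(r)=0$, $\sigma_{11}(r)=c_{1,r}=\sigma_1(r)$, $\sigma_{22}(r)=c_{2,r}=\sigma_2(r)$, $p_{11}=0$, and $p_{12}=c_{1,2}$. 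Connectedness of $R$ together with Proposition \ref{prop24}(ii) forces $c_{1,2}\in K\setminus\{0\}$, so in particular $p_{12}\neq 0$. Finally, any graded quasi-commutative skew PBW extension is bijective, whence $\sigma_{11}=\sigma_1$ and $\sigma_{22}=\sigma_2$ are automorphisms of $R$.

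For the converse, assume the four conditions hold. Theorem \ref{rel112} applies directly to $A$ (its hypotheses make no demand on $\delta$ or $\tau$), giving that $A$ is a graded skew PBW extension of $R$. Since the extension is trimmed we have $\delta_1=\delta_2=0$ and $\tau_0=\tau_1=\tau_2=0$, so the PBW relations produced by Proposition \ref{prop23} and by the defining identity for $x_2x_1$ collapse to $x_ir=\sigma_{ii}(r)x_i$ and $x_2x_1=p_{12}x_1x_2$; these are precisely conditions \textnormal{(iii')} and \textnormal{(iv')} of Definition \ref{sigmapbwderivationtype}(a), so $A$ is quasi-commutative.

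The main point requiring care is the extraction of the entries of $\sigma$ and of the scalars $p_{12},p_{11}$ from the quasi-commutative relations in the forward direction; this rests on the freeness of the left $R$-basis $\{x_1^{\alpha_1}x_2^{\alpha_2}\}$. Beyond that, the statement is essentially the trimmed restriction of Theorem \ref{relaciondoblesgrad}, and no genuine obstacle is expected.
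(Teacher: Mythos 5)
Your proposal is correct and follows essentially the same route as the paper's own proof: extract $p_{11}=0$, $p_{12}=c_{1,2}\neq 0$, $\sigma_{12}=\sigma_{21}=0$ and $\sigma_{11}=\sigma_1$, $\sigma_{22}=\sigma_2$ by comparing the trimmed double Ore relations with the quasi-commutative relations, and invoke Theorem \ref{rel112} for the converse before observing that trimmedness kills $\delta$ and $\tau$ and hence yields conditions (iii') and (iv'). You in fact justify a few steps the paper leaves implicit (coefficient matching via freeness of the left $R$-basis, $c_{1,2}\in R_0=K$ by connectedness, and bijectivity giving automorphisms), but the argument is the same.
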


\begin{proof}
Let $A=R_P[x_1,x_2,\sigma]$ be a graded trimmed right double
extension of a connected algebra $R$. Then from (\ref{R2}),
\begin{equation}\label{R2trimed} x_1r=\sigma_{11}(r)x_1+\sigma_{12}(r)x_2\quad\text{ and
}\quad x_2r=\sigma_{21}(r)x_1+\sigma_{22}(r)x_2,
\end{equation}
for all $r \in R$.\\
From (\ref{R1}),
\begin{equation}\label{R1trim} x_2x_1 = p_{12}x_1x_2 + p_{11}x_1^2,
\end{equation}
where $p_{12}, p_{11}\in K$.\\
If $A$ is a graded quasi-commutative skew PBW extension of $R$, then
  equations (\ref{R2trimed}) and (\ref{R1trim}) become
\begin{equation}\label{R2quasi}
x_1r= \sigma_{11}(r)=\sigma_1(r)x_1 \quad\text{ and }\quad x_2r=
\sigma_{22}(r)x_2=\sigma_2(r),
\end{equation}
where $\sigma_1$ and $\sigma_2$ are as in Proposition \ref{prop23};\\
\begin{equation}\label{R1quasi} x_2x_1 = p_{12}x_1x_2.
\end{equation}
Therefore, $p_{12}\neq 0$, $p_{11}=0$, $\sigma_{11}$, $\sigma_{22}$
are automorphism of $R$ and $\sigma_{12}(r)=\sigma_{21}(r)=0$, for all $r\in R$.\\
For the converse implication, suppose that $p_{12}\neq 0$,
$p_{11}=0$, $\sigma_{11}$, $\sigma_{22}$ are automorphism of $R$ and
$\sigma_{12}(r)=\sigma_{21}(r)=0$, for all $r\in R$. By Theorem
\ref{rel112}, $A$ is a graded skew PBW extension of $R$. Now
equations (\ref{R2trimed}) and (\ref{R1trim}) become
\begin{equation}\label{R2conv} \left(\begin{smallmatrix}
x_1r \\
x_2r \\
\end{smallmatrix}\right)= \left(\begin{smallmatrix}
\sigma_{11}(r) && 0 \\
0 && \sigma_{22}(r) \\
\end{smallmatrix}\right)\left(\begin{smallmatrix}
x_1 \\
x_2 \\
\end{smallmatrix}\right),
\end{equation}
for all $r \in R$; and\\
\begin{equation}\label{R1conv} x_2x_1 = p_{12}x_1x_2.
\end{equation}
From (\ref{R2conv}) we obtain
\begin{equation}\label{R3conv}
x_1r=\sigma_{11}(r)x_1 \quad\text{ and }\quad
x_2r=\sigma_{22}(r)x_2.
\end{equation}
From (\ref{R1conv}) and (\ref{R3conv}) we have that $A$ is
quasi-commutative.
\end{proof}

In \cite{Hect} it is shown that  graded quasi-commutative skew PBW
extensions of Artin-Schelter regular algebras are Artin-Schelter
regular, and that graded skew PBW extensions of  Auslander regular
algebras are Artin-Schelter regular. Since Auslander regular
algebras are Artin-Schelter regular, in the next theorem we
generalize
 these results for skew PBW extensions in two variables.

\begin{theorem}\label{result}
If $R$ is an Artin-Schelter regular algebra and $A =
\sigma(R)\langle x_1,x_2 \rangle$ is a graded skew PBW extension of
$R$, then $A$ is  Artin-Schelter regular and $gld(A)=gld(R)+2$.
\end{theorem}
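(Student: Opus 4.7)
The proof plan is essentially a two-step reduction to the Zhang--Zhang theorem on double Ore extensions. The first step handles the connectedness hypothesis needed to invoke the earlier results, and the second step translates the skew PBW hypothesis into a double Ore hypothesis.

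First I would observe that since $R$ is Artin-Schelter regular, the definition (Definition \ref{def.Regular}) requires $R = K \oplus R_1 \oplus R_2 \oplus \cdots$, so $R$ is a connected $\mathbb{N}$-graded algebra. By Remark \ref{notagrd}, a graded skew PBW extension $A = \sigma(R)\langle x_1,x_2\rangle$ is connected if and only if $R$ is connected, so $A$ is itself connected graded. This clears the way for applying Theorem \ref{fff}, which requires exactly this connectedness.

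Next I would invoke Theorem \ref{fff} directly: since $A = \sigma(R)\langle x_1,x_2\rangle$ is a connected graded skew PBW extension of $R$, it is a connected graded double Ore extension of $R$ (in fact in the explicit form $A = R_P[x_1,x_2;\sigma,\delta,\tau]$ with $P = \{c_{1,2},0\}$ and diagonal $\sigma$ as described in the proof of Theorem \ref{fff}). At this point the hypotheses of Theorem \ref{Teorema0.2} (\cite{Zhang}, Theorem 0.2) are exactly met: $R$ is Artin-Schelter regular and $A$ is a connected graded double Ore extension of $R$. Applying that theorem immediately gives the conclusion that $A$ is Artin-Schelter regular with $\mathrm{gld}(A) = \mathrm{gld}(R) + 2$.

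There is no real obstacle here, since the hard work has already been done in Theorem \ref{fff} (translating the graded skew PBW structure into double Ore structure) and in Zhang--Zhang's Theorem \ref{Teorema0.2} (establishing the Artin-Schelter regularity and global dimension formula in the double Ore setting). The proof itself will be a short paragraph combining these two inputs; the only subtlety is to remember to verify connectedness of $A$ via the Artin-Schelter hypothesis on $R$ and Remark \ref{notagrd}, because Theorem \ref{fff} is stated for connected graded extensions while the theorem to be proved merely assumes $A$ is a graded skew PBW extension.
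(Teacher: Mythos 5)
Your proposal is correct and follows exactly the same route as the paper's own proof: establish connectedness of $A$ from the Artin-Schelter hypothesis on $R$ via Remark \ref{notagrd}, apply Theorem \ref{fff} to realize $A$ as a connected graded double Ore extension of $R$, and then conclude by Theorem \ref{Teorema0.2}. No discrepancies to report.
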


\begin{proof}
If $R$ is Artin-Schelter regular then it is connected. By Remark
\ref{notagrd}, $A$ is connected. Thus, by Theorem \ref{fff}, $A$
connected graded  double Ore extension
of $R$. By Theorem \ref{Teorema0.2}, $A$ is Artin-Shelter regular.\\
\end{proof}

\begin{definition}\label{def1.1} A graded algebra $B$ is called \emph{skew Calabi-Yau} of dimension $d$ if
\begin{enumerate}

\item[(i)] $B$ is homologically smooth, i.e, as an $B^e$-module, $B$ has a   projective re\-so\-lu\-tion with finite length  such that each term in the projective resolution is finitely generated.
\item[(ii)]There exists an  algebra automorphism  $\nu$ of $B$ such that \[Ext^i_{B^e} (B,B^e) \cong \left\{
                                         \begin{array}{ll}
                                           0, & i\neq d; \\
                                           B^{\nu}(l), & i= d.
                                         \end{array}
                                       \right.\]
 as $B^e$ -modules, for some integer $l$.
\end{enumerate}
\end{definition}

Reyes,  Rogalski and Zhang in \cite{Reyes} proved that for a
connected graded algebra $B$, $B$ is skew Calabi-Yau  if and only if
it is Artin-Schelter regular. Using this fact and Theorem
\ref{result}, we have the following result.

\begin{corollary}\label{resultCalab} Let  $R$ be a connected algebra. If $R$ is skew Calabi-Yau of dimension $d$, then every graded skew PBW extension
$A = \sigma(R)\langle x_1,x_2 \rangle$  is skew Calabi-Yau of
dimension $d+2$.
\end{corollary}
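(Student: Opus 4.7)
The plan is to combine Theorem \ref{result} with the Reyes--Rogalski--Zhang equivalence between the skew Calabi--Yau property and Artin--Schelter regularity in the connected graded setting. Concretely, I would translate the hypothesis on $R$ from skew Calabi--Yau to Artin--Schelter regular, apply Theorem \ref{result} to pass this property to $A$ together with the computation of the global dimension, and finally translate back to the skew Calabi--Yau statement for $A$.

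First I would observe that because $R$ is connected and skew Calabi--Yau of dimension $d$, the equivalence recalled just before the statement (proved by Reyes, Rogalski and Zhang in \cite{Reyes}) forces $R$ to be Artin--Schelter regular with $gld(R)=d$. I would then invoke Theorem \ref{result}: the graded skew PBW extension $A=\sigma(R)\langle x_1,x_2\rangle$ of the Artin--Schelter regular algebra $R$ is itself Artin--Schelter regular, and moreover $gld(A)=gld(R)+2=d+2$.

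Next I would need to return to the skew Calabi--Yau formulation, and for that I must know that $A$ is connected graded, since the Reyes--Rogalski--Zhang equivalence is only asserted under connectedness. This is supplied by Remark \ref{notagrd}, which guarantees that a graded skew PBW extension of a connected algebra is connected. With $A$ connected graded and Artin--Schelter regular, the same equivalence in the opposite direction yields that $A$ is skew Calabi--Yau, and the dimension must coincide with the global dimension $gld(A)=d+2$.

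The only potentially delicate point is making sure that the integer $d$ in \textquotedblleft skew Calabi--Yau of dimension $d$\textquotedblright\ matches the global dimension of $R$ (and similarly for $A$) under the Reyes--Rogalski--Zhang equivalence; this is built into the form of their theorem, where the dimension in Definition \ref{def1.1} appears as the single nonvanishing degree of $Ext^{*}_{B^e}(B,B^e)$, which for an Artin--Schelter regular connected graded algebra equals its global dimension. Beyond this bookkeeping, the proof is a short two-step chain and no genuine obstacle is expected.
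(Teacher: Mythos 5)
Your proposal is correct and follows essentially the same route as the paper: translate the hypothesis on $R$ via the Reyes--Rogalski--Zhang equivalence, apply Theorem \ref{result} to get Artin--Schelter regularity of $A$ with $gld(A)=gld(R)+2$, and translate back using the connectedness of $A$ (Remark \ref{notagrd}). Your extra remark on matching the skew Calabi--Yau dimension with the global dimension is a reasonable bookkeeping point that the paper leaves implicit.
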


\begin{example}\label{ex.ArtinCal} With the above results we have that Examples \ref{ex.rig}, \ref{asejeml}, \ref{oreejm1},
\ref{oreejm3} are Artin-Schelter regular and skew Calabi-Yau
algebras.
\end{example}

\end{document}